\numberwithin{equation}{section}
\date{\today}
\subjclass{20C15, 20G42}
\keywords{Weyl group, Schur indicator, Frobenius-Schur index,
rationality of representations}
\newcommand\MM{{\mathcal M}}
\newcommand\AAA{{\mathcal A}}
\newcommand\NN{{\mathcal N}}
\newcommand{\mathdef}[1]{{\bf{#1}}}
\newcommand{\abs}[1]{{\left|#1\right|}}
\newcommand{\bra}[1]{{\langle#1\rangle}}
\newcommand\CC{{\mathbb{C}}}
\newcommand\QQ{{\mathbb{Q}}}
\newcommand\RR{{\mathbb{R}}}
\newcommand\ZZ{{\mathbb{Z}}}
\DeclareMathOperator{\Alt}{Alt}
\DeclareMathOperator{\Br}{Br}
\DeclareMathOperator{\End}{End}
\DeclareMathOperator{\Gal}{Gal}
\DeclareMathOperator{\Hom}{Hom}
\DeclareMathOperator{\inv}{inv}
\DeclareMathOperator{\Irr}{Irr}
\DeclareMathOperator{\Mat}{Mat}
\DeclareMathOperator{\Sym}{Sym}
\newcommand\Sn{{\Sym}_n}
\newcommand\An{{\Alt}_n}
\newtheorem{theorem}{Theorem}[section]
\newtheorem{question}[theorem]{Question}
\newtheorem{lemma}[theorem]{Lemma}
\newtheorem{corollary}[theorem]{Corollary}
\begin{document}

\title{Certain subgroups of Weyl groups are split}

\author{Daniel Goldstein} 
\address{
Center for Communications Research, 
4320 Westerra Court,
San Diego, 
CA 92121-1967} \email{dgoldste@ccrwest.org}

\author{\hbox{Robert M.~Guralnick}} 
\address{
Department of Mathematics,
University of Southern California,
CA 90089-2532} \email{guralnic@usc.edu}

\begin{abstract} \label{abs}
Let $C$ be the centralizer in a finite Weyl group
of an elementary abelian $2$-subgroup.
We show that every complex representation of
$C$ can be realized over the field of rational numbers.
The same holds for a Sylow $2$-subgroup of~$C$.
\end{abstract}

\maketitle

\tableofcontents

\section{Introduction} \label{intro}

The purpose of this note is to generalize the
well-known result that every complex representation of a finite
Weyl group can be realized over the field of rational number $\QQ$.
Benard \cite{benard} proved this by a case-by-case analysis of the
exceptional groups. Springer's 
cohomological construction \cite[Corollary 1.5]{springer}
gives a more uniform proof.

We set some notation that is used throughout the paper.
$W$ is a finite Weyl group, 
$E$ is an elementary abelian $2$-subgroup of $W$,
$C$ is the centralizer of $E$ in $W$, and $T$ is 
a Sylow $2$-subgroup of~$C$.

\begin{theorem} \label{main} 
All complex representations of the groups $C$ and $T$ can
be realized over $\QQ$.
\end{theorem}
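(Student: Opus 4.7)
My plan is to prove both rationality claims via the standard criterion that an irreducible complex representation of a finite group is realizable over $\QQ$ if and only if its character is $\QQ$-valued and its Schur index over $\QQ$ equals $1$. The two conditions will be checked separately, and the Schur-index assertion is the harder ingredient in each case.

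For $C$, I will realize $C$ as essentially the Weyl group of a root subsystem of the ambient root system of $W$, and then invoke the Benard--Springer rationality theorem. Embed $W$ in a split connected reductive group $G$ defined over $\QQ$ as $N_G(T_G)/T_G$ for a split maximal torus $T_G$, and lift $E$ to an elementary abelian $2$-subgroup $\tilde E \subset N_G(T_G)$. The identity component $H := C_G(\tilde E)^\circ$ is a pseudo-Levi subgroup of $G$, reductive and defined over $\QQ$; its Weyl group is a normal subgroup of $C$, the quotient being controlled by the component group of $C_G(\tilde E)$. Springer's cohomological construction applied to the $\QQ$-flag variety of $H$ realizes every irreducible of the Weyl group of $H$ over $\QQ$, and a Clifford-theoretic argument upgrades this to all of $C$ once the component-group extension is shown to respect the rational structure; rational character values come along for free from the geometric model.

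For $T$, rationality of character values will be read off from the structure of $C$: each element of $T$ is $C$-conjugate to every odd-indexed power of itself by the analogous property of the Weyl-type group $C$, and standard Sylow arguments then allow one to choose the conjugator inside $T$ itself. The main obstacle is showing $T$ has Schur index $1$ for every irreducible, since a $2$-group can have Schur index $2$ (as witnessed by $Q_8$) and so character rationality alone will not suffice. I plan to handle this by realizing each irreducible of $T$ with multiplicity one inside a manifestly $\QQ$-rational representation --- for instance, a permutation or monomial representation induced from a suitable subgroup of $T$, or the pullback via a natural embedding of $T$ into the Sylow $2$-subgroup of a hyperoctahedral (type-$B$) Weyl group --- which, combined with the already-verified character rationality, forces the Schur index to equal $1$.
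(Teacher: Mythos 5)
Your proposal has two genuine gaps, and both occur at exactly the points where the paper has to work hardest. First, the centralizer $C$ is \emph{not} ``essentially the Weyl group of a root subsystem,'' and the pseudo-Levi picture does not capture it: already in type $A$, the centralizer in $\Sym_{2n}$ of a fixed-point-free involution is the hyperoctahedral group $(\ZZ/2\ZZ)\wr\Sym_n$, which is not a reflection subgroup of $\Sym_{2n}$, and more generally $C$ is a direct product of wreath products $E_k\wr\Sym_{a_k}$ with $|E_k|=2^k$, which for $k\ge 2$ is not a Weyl group of any type. So Benard--Springer cannot be applied to $C$ via a subgroup $H\le G$ with ``Weyl group normal in $C$''; and even if such a normal subgroup existed, the step you defer to ``a Clifford-theoretic argument upgrades this to all of $C$'' is precisely the hard content: splitness over $\QQ$ does not pass through extensions in any routine way (neither rationality of values nor triviality of the Schur index is inherited), which is why the paper proves a genuine wreath-product theorem ($H$ is split iff $H\wr\Sym_n$ is split), an index-$2$ descent lemma, and, for the exceptional types, resorts to a machine verification that every $\chi\in\Irr(C)$ is rational-valued and occurs with \emph{odd} multiplicity in a character induced from a split Sylow $2$-subgroup, so that Brauer--Speiser plus the divisibility $m_\chi\mid(\chi,\rho)$ forces $m_\chi=1$. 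The authors explicitly say they do not know a uniform argument of the kind you sketch even in the classical case.

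Second, your treatment of $T$ breaks down at the start: the claim that $t^r\sim_C t$ plus ``standard Sylow arguments'' yields a conjugator inside $T$ is unsupported, and no such general argument exists --- whether rationality of character values descends from a group to its Sylow $2$-subgroup is raised as an open question in this very paper. (Fusion-type arguments only move conjugacy into $N_G(T)$ or apply to elements central in $T$, not to arbitrary $t\in T$.) The paper instead gets rationality of $T$ from its explicit structure ($T$ is built from $E\wr S_0$ with $S_0$ a Sylow $2$-subgroup of a symmetric group, handled by the wreath-product results and an index-$2$ lemma for type $D$), and in the exceptional cases by direct computation. Your plan for the Schur index of $T$ --- embedding each irreducible with odd multiplicity in a manifestly rational representation --- is also only a plan, with no candidate subgroup or induction specified; the paper's route is different and complete: for a $2$-group, rational values together with Frobenius--Schur indicator $+1$ force the Schur index to be $1$ by a local-global argument (the local index is $1$ at the real place and at all odd primes, hence also at $2$). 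As written, then, the proposal would not compile into a proof: it asserts a structural description of $C$ that is false in general, and it assumes two inheritance statements (splitness across the component-group extension, rationality down to the Sylow $2$-subgroup) that are the actual theorems to be proved.
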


Theorem~\ref{main} reduces to the result of Benard and Springer
in the case $E=1$. 
The fact that every irreducible complex character of 
$C$ has values in $\QQ$ is proved
in~\cite{kol}. The case of classical Weyl groups 
was treated in \cite{gm}.

We say the finite group $G$ is \mathdef{split}
if each of its complex representations can be realized over $\QQ$.
With this terminology, Theorem~\ref{main} is the assertion 
that $C$ and $T$ are split.
Clearly, $G$ is split if and only if each of its 
irreducible complex representations can be realized over $\QQ$
(since every complex representation of $G$ is a direct sum of
irreducible representations).
Let $\Irr(G)$ denote the set of irreducible complex
characters of $G$.
We say that $\chi\in \Irr(G)$ is
\mathdef{split} if it is the trace of a $\QQ G$-module.
If $\chi$ is split, then $\chi$ is rational-valued, but
the converse can fail (e.g. the quaternion group of order eight).
In fact, $\chi$ is split if and only if $\chi$ is rational-valued and
the Schur index of $\chi$ is equal to $1$.

In this paper, we use both the Schur index and the Frobenius-Schur
indicator. We remark that for a $2$-group, the Schur index of 
an irreducible character is trivial if and only if the Schur index is 
$+1$ (Corollary~\ref{2-groups}).

We mention some easy reductions used in the proof.
It suffices to prove the theorem for 
irreducible finite Weyl groups,
as any finite Weyl group is a direct product of these
(see Section~\ref{direct products}).

An irreducible Weyl group is either of classical or exceptional type.
The proof for a classical Weyl group $W$ is in
Section~\ref{classical}.

For the exceptional Weyl groups, we prove a slightly stronger statement 
(Theorem~\ref{gg3}). We suspect (but do not prove) that
this stronger statement also holds for the classical case.

Let $G$ be a finite group and $S$ a Sylow $2$-subgroup of $G$.
Then the following four conditions, if they hold,
would imply that both $G$ and $S$ are split.

\begin{enumerate}

\item Each irreducible complex character of $S$ is rational-valued.

\item Each Frobenius-Schur indicator of $S$ is equal to $+1$.

\item Each irreducible complex character $\psi$ of $G$
occurs with odd multiplicity in an induced
character from~$S$.

\item Each irreducible complex character $\psi$ of $G$ 
is rational-valued.
\end{enumerate}

By the remark, the $2$-group 
$S$ is split if and only if conditions (1) and~(2) hold. 

Now assume that $S$ is a Sylow $2$-subgroup with
$S$ split. 

Let $\psi$ be an irreducible complex character of $G$
such that conditions  (3) and (4)  hold.
It follows from the theory of Schur indices and the Brauer-Speiser theorem
(see \cite{brauer}, \cite{speiser})
that $\psi$ is the trace of a $\QQ G$-module (Corollary~\ref{useful}).
Consequently, conditions (3) and (4) , for all such $\psi$, would 
imply that $G$ is split.

We show that conditions (1)--(4) hold for every exceptional 
($C$, $T$) pair.
We note that our proof of Theorem~\ref{main} 
for the classical groups does not use conditions (1)--(4).
We do not know whether condition~(3) 
holds for the classical groups.

Our proof of this stronger statement for an exceptional Weyl group~$W$,
relies heavily on a
computer calculation. Our method is as follows.
First, we enumerate the conjugacy classes of
subgroups $C\le W$ that are centralizers of an elementary
abelian $2$-subgroup. 
This list can be computed by an easy downward
induction. (Choose an involution. Take its centralizer.
Remove duplicate subgroups. Repeat.
See Appendix~\ref{code} for Magma code~\cite{magma} to do this.)

There is a good reason for using this sort of algorithm. Namely, 
the number of
conjugacy classes of elementary
$2$-subgroups of $W$
can be much
larger than the number of 
conjugacy classes of 
centralizers of elementary abelian $2$-subgroups of $W$. 
For the symmetric groups, see
Section~\ref{symmetric}.

\section{Symmetric groups} \label{symmetric}

Let $\Sn$ denote the symmetric group on $n$ letters.
Let $e(n)$ denote the number of conjugacy classes of elementary
abelian $2$-subgroups of $\Sn$. Let $c(n)$ denote the number of conjugacy
classes of centralizers of elementary abelian $2$-subgroups of $\Sn$.
In this section we show that $e(n)\ge c(n)$,
at least for large $n$.

Let $\log$ denote the logarithm to the base $2$.

\begin{lemma} \label{counting}
Let $n\ge1$. Then
\begin{enumerate}
\item $\log c(n) \le (\log n)^2$, and 
\item $\log e(n) \ge  (n-3)^2/16 - n/4 - n\log n $.
\end{enumerate}
\end{lemma}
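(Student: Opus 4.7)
I will establish (1) and (2) separately by elementary counting.

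For (1), the key structural input is that for any elementary abelian 2-subgroup $E \le \Sn$ with orbit-stabilizer decomposition on $\{1,\dots,n\}$,
\[
C_{\Sn}(E) \;=\; \prod_F (E/F) \wr \Sym(n_F),
\]
where $F$ ranges over the distinct subgroups of $E$ arising as stabilizers of $E$-orbits and $n_F$ is the number of orbits with stabilizer $F$. Hence the $\Sn$-conjugacy class of $C_{\Sn}(E)$ is encoded in the multiset $\{(|E/F|,n_F)\}$, which is equivalent to a partition of $n$ into powers of $2$ (the multiset of orbit sizes of $E$) together with, for each part-size $2^k$, a sub-partition of its multiplicity by stabilizer type. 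Mahler's bound on partitions of $n$ into powers of $2$ combined with elementary estimates on the sub-partition factor gives the stated inequality.

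For (2), I will construct many pairwise non-conjugate elementary abelian 2-subgroups. Set $r = \lfloor(n-3)/4\rfloor$ and $E = (\ZZ/2)^r$. Consider $E$-sets on $\{1,\dots,n\}$ consisting of $r$ orbits of size $4$ --- each with a codimension-$2$ subgroup of $E$ as its stabilizer --- together with $n - 4r \ge 3$ fixed points. Such an $E$-set is specified by an unordered multiset of $r$ codimension-$2$ subgroups of $E$, drawn from a pool of $N = \binom{r}{2}_2 \ge 2^{2r-3}$ such subgroups. The number of these multisets is $\binom{r + N - 1}{r} \ge (N/r)^r$, whose logarithm is at least $2r^2 - O(r \log r)$.

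Next I verify that most multisets are faithful, meaning $\bigcap F_i = \{0\}$. For each nonzero $v \in E$, the codimension-$2$ subgroups containing $v$ form at most a quarter of all such subgroups, so the fraction of multisets whose members all contain $v$ is at most $4^{-r}$; a union bound over the $2^r - 1$ nonzero vectors of $E$ bounds the non-faithful fraction by $2^{-r}$, so more than half of all multisets give a faithful $E$-action. Finally, two faithful multisets yield $\Sn$-conjugate subgroups iff they lie in the same $\mathrm{Aut}(E) = GL_r(\mathbb{F}_2)$-orbit; since $|GL_r(\mathbb{F}_2)| < 2^{r^2}$, we obtain
\[
\log e(n) \;\ge\; 2r^2 - O(r \log r) - r^2 - O(1) \;=\; r^2 - O(r\log r),
\]
and with $r = \lfloor(n-3)/4\rfloor$ the right-hand side is $\ge (n-3)^2/16 - n/4 - n\log n$ once the $O(n)$ shortfall of $r^2$ from $((n-3)/4)^2$ and the $O(r\log r)$ multiset error are absorbed. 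The main obstacle is the book-keeping for (2): making sure the $O(r\log r)$, $O(n)$, and $O(1)$ error terms all fit inside the stated $-n/4 - n\log n$ slack; this is routine but requires careful tracking of constants.
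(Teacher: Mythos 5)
Your part (2) is essentially correct, and it is a genuinely different construction from the paper's. The paper takes $E$ of rank $2m$, $m=\lfloor n/4\rfloor$, with $2m$ orbits of size $2$, counts the $\ge 2^{m^2-m}$ subgroups of half dimension, and divides by the crude class-size bound $n!$; you instead fix $E$ of rank $r=\lfloor (n-3)/4\rfloor$, vary the multiset of codimension-$2$ orbit stabilizers for $r$ orbits of size $4$, and divide by $|\mathrm{GL}_r(\mathbb{F}_2)|\le 2^{r^2}$ after discarding the (few) non-faithful multisets. Your arithmetic does fit: your chain gives $\log e(n)\ge r(2r-3-\log r)-r^2-1=r^2-3r-r\log r-1$, and since $r\le n/4$ and $r\ge (n-6)/4$ this exceeds $(n-3)^2/16-n/4-n\log n$ whenever that quantity is nonnegative (it is negative for $n\lesssim 128$, which also covers the small $r$ where your estimates for the Gaussian binomial and the union bound need $r$ not tiny). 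So the bookkeeping you flag as the main obstacle is indeed routine.

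Part (1) is where the genuine gap is, and it is not just bookkeeping. Your decomposition $C_{\Sn}(E)=\prod_F (E/F)\wr\Sym(n_F)$, indexed by the distinct orbit kernels $F$, is the accurate one (an element of the centralizer can only permute orbits with equal kernels). The paper's proof instead groups orbits by size and asserts $C\cong\prod_k 2^k\wr\Sym_{a_k}$, i.e.\ that the conjugacy class of $C$ is determined by the orbit sizes alone; that coarser claim is exactly what produces the bound $c(n)\le$ (number of binary partitions of $n$) $\le n^{\log n}$. Your finer parametrization does not produce it: the ``sub-partition factor'' you propose to estimate is $\prod_k p(a_k)$, with $p$ the ordinary partition function, and already the configurations in which every orbit has size $2$ contribute about $p(\lfloor n/2\rfloor)=2^{\Theta(\sqrt n)}$ distinct classes, which cannot be absorbed into a $2^{(\log n)^2}$ bound. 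These classes really are distinct: for a partition $\lambda$ of $\lfloor n/2\rfloor$, letting $E$ be generated by disjoint involutions that are products of $\lambda_1,\lambda_2,\dots$ transpositions gives centralizer $\prod_j 2\wr\Sym_{\lambda_j}$, and distinct $\lambda$ give non-conjugate centralizers since the orbit sizes $2\lambda_j$ of $C$ recover $\lambda$; for instance in $\Sym_6$ the subgroups $\langle (12)(34)(56)\rangle$ and $\langle (12),(34),(56)\rangle$ have the same orbit data but centralizers of orders $48$ and $8$. So your closing sentence for (1) (``elementary estimates on the sub-partition factor gives the stated inequality'') fails, and your own, correct, structural analysis indicates that the $(\log n)^2$ bound cannot be recovered along these lines at all; what your count does yield is $\log c(n)=O(\sqrt n\,)$, weaker than statement (1) but still sufficient for the only use made of the lemma, namely $e(n)\ge c(n)$ for large $n$. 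You should either supply an argument that collapses the kernel data (none is apparent, and the examples above suggest the obstruction lies in the stated bound rather than in your estimates) or prove and use the weaker $O(\sqrt n\,)$ bound instead.
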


Let $E$ be an elementary abelian $2$-subgroup of $\Sn$.
Each orbit of $E$ has size a power of~$2$.
Let $a_k$ be the number of orbits of size~$2^k$.
Let~$C$ be the centralizer of~$E$. 
Then~$C$ is direct product of groups $C_k$, where each $C_k$ 
is a wreath product of the symmetric group $\Sym_{a_k}$
with an elementary abelian $2$-group of order $2^k$ acting regularly
on $2^k$ points.

It follows from this description that $c(n)$ 
is equal to the number of partitions
of $n$ into powers of $2$. In other words, $c(n)$ is equal to the
number of ways of writing 
$$n = a_0\cdot1 + a_1\cdot2 + a_2\cdot4 + \cdots + a_k\cdot2^k
$$
with nonnegative integers $a_0,\dots, a_k$.

Since $k\le \log n$, and each $a_i\le n$, it follows that
$c(n)\le n^{\log n}$.

Let $V$ be a vector space of dimension $2m$ over the field of $2$
elements. Then the number of subspaces of $V$ of dimension $m$ is 
\begin{align*} 
\frac{(2^{2m}-1) (2^{2m}-2)\cdots (2^{2m} - 2^{m-1})}
{(2^{m}-1) (2^{m}-2)\cdots (2^{m} - 2^{m-1})}
& \ge \frac{2^{(2m-1)m}}{2^{m^2}} = 2^{m^2-m}.
\end{align*}

Next we estimate the number of conjugacy classes of elementary abelian
$2$-subgroups of $\Sn$. Set $m=\lfloor n/4 \rfloor$.
Let $E$ be an elementary abelian $2$-group of rank $2m$ 
acting on $4m$ points, with $2m$ orbits of size~$2$.
By the previous paragraph, the number of subgroups of $E$ of order
$2^m$ is at least $2^{m^2-m}$.
Since each conjugacy class of subgroups can have at most $n!$
members, we have $e(n) \ge 2^{m^2}/(2^{m} n!)$.
Since $n!\le n^n,$ we see that $\log e(n) \ge (n-3)^2/16 - n/4 - n\log
n$.

This finishes the proof of the lemma.
From the lemma and a direct calculation, one sees that $e(n)\ge
c(n)$ for $n\ge 129$.

\section{Direct products} \label{direct products}

By the following lemma, whose straightforward proof we omit,
it suffices to prove Theorem~\ref{main} for 
irreducible finite Weyl groups.

Consider the property $P(G)$ of a group $G$:

\smallskip

\begin{quotation} 
\noindent $P(G):$ For each elementary abelian $2$-subgroup  $E$ of $G$,
the centralizer of $E$ in $G$ and each of its Sylow $2$-subgroups are
split.
\end{quotation}

\smallskip 

\begin{lemma} \label{products}
Let $A=\prod A_i$ be a finite direct product of finite groups. Then
\begin{enumerate}
\item $A$ is split if and only if each $A_i$ is split.
\item $P(A)$ holds if and only if
  $P(A_i)$ holds for each $i$.
\end{enumerate}
\end{lemma}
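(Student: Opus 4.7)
The plan for part~(1) is to invoke the standard fact that every irreducible complex representation of a direct product $A=\prod A_i$ is an outer tensor product $\rho_1\otimes\cdots\otimes\rho_n$ of irreducible representations of the factors. The forward implication will then be immediate from the observation that an outer tensor product of $\QQ$-representations is itself realized over $\QQ$. For the reverse direction, given an irreducible complex representation $\rho$ of a fixed factor $A_i$, I would inflate it to $A$ via the projection $A\to A_i$ (letting the other factors act trivially); the result is an irreducible representation of $A$, which by hypothesis is realized over $\QQ$, and restricting back to $A_i$ recovers $\rho$ over $\QQ$.

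For part~(2), the key identity will be
\[
C_A(E)=\prod_i C_{A_i}(\pi_i(E))
\]
for any subgroup $E\le A$, where $\pi_i\colon A\to A_i$ is the $i$-th projection; this is a direct check, since $(a_1,\dots,a_n)$ commutes with $(e_1,\dots,e_n)$ iff each $a_j$ commutes with $e_j$. When $E$ is elementary abelian of $2$-power order, each $\pi_i(E)\le A_i$ inherits the same property, and a Sylow $2$-subgroup of the product decomposes as the product of Sylow $2$-subgroups of the factors. Combined with part~(1), this handles both directions: if each $P(A_i)$ holds, applying it to the elementary abelian $2$-groups $\pi_i(E)\le A_i$ together with part~(1) yields $P(A)$. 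Conversely, $P(A)$ applied to $E\le A_i$ (viewed inside $A$) gives that $C_{A_i}(E)\times\prod_{j\ne i}A_j$ and its Sylow $2$-subgroup are split; taking $E=1$ first shows each $A_j$ itself is split, so part~(1) then lets me strip off the trivial factors to deduce $P(A_i)$.

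The only mild subtlety I anticipate is that a subgroup $E\le\prod A_i$ need not be a direct product of subgroups of the factors, which is precisely why the centralizer identity must be phrased in terms of the projections $\pi_i(E)$ rather than the intersections $E\cap A_i$. Beyond that point, the argument is a routine bookkeeping exercise, and I expect no real obstacle.
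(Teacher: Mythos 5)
Your proof is correct, and since the paper explicitly omits the proof of this lemma as ``straightforward,'' your argument (outer tensor products plus inflation for part~(1), and the identity $C_A(E)=\prod_i C_{A_i}(\pi_i(E))$ with Sylow subgroups of a product being products of Sylows for part~(2)) is exactly the standard argument the authors have in mind. Your remark that $E$ need not split as a product of subgroups of the factors, so one must use the projections $\pi_i(E)$, is the one genuine subtlety and you handle it correctly.
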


\section{Wreath products} \label{wreath}
Let $H$ be a finite group.
Let $H_n$ denote the wreath product $H \wr \Sn$.
We show that for each $n>0, H$ is split if and only if
$H_n$ is split.

First we construct some $\QQ H_n$-modules.
Given a $\QQ H$-module $M$, let $\Sym_n$ act on
the $n$-fold tensor product $M^{\otimes n}$
by permuting coordinates.
As this action is compatible with the $\QQ H^n$-module structure,
$M^{\otimes n}$ acquires a $\QQ H_n$-module structure.

Finally, for any $\QQ H$-module $M$ and any $\QQ\Sym_n$-module $N$,
we define the $\QQ H_n$-module $W(M,N)= M^{\otimes n} \otimes N$.
On the first factor, $H_n$ acts as described in the previous
paragraph.
On the second factor, $H_n$ acts through its quotient $\Sym_n=H_n/H^n$.

Its dimension is $\dim W(M,N) = (\dim M)^{n}\cdot \dim N$.
If $M$ and $N$ are irreducible, then $W(M,N)$ is irreducible.

Let $M_1,\dots,M_k$, be representatives for the isomorphism classes of
irreducible $\QQ H$-modules. Set $d_i= \dim M_i$.

Let $\AAA$ denote the set of tuples $a = (a_1,\dots,a_k)$ of nonnegative
integers with sum $a_1+\dots +a_k=n$.
For $a= (a_1,\dots,a_k) \in \AAA$, set
$P_a= \prod_{1\le i\le k}\Sym_{a_i}$.
Then

\begin{equation} \label{w1}
\abs{\Sn: P_a} = \frac{n!}{a_1!\cdots a_k!}.
\end{equation}

For each $a\in\AAA$, 
let $\NN_a$ denote the set of irreducible $\QQ P_a$-modules.
($\NN_a$ may be identified (via tensor product)
with the set of tuples $(N_1,\dots, N_k)$,
where each $N_i$ is an irreducible $\QQ \Sym_{a_i}$-module).

For $a\in \AAA$ and $\nu\in\NN_a$,
form the $H_n.P_a$-module
\begin{equation} \label{bigguy}
\bigotimes_{1\le i\le k} W(M_i,N_i),
\end{equation}
and let $Z(a,\nu)$ be the $H_n$-module obtained by induction.
The $\QQ H_n$-modules $Z(a,\nu)$ are irreducible and pairwise nonisomorphic.
We have

\begin{eqnarray} \label{w2}
\dim W(a,\nu) &= &|\Sn:P_a|\ \dim \nu, \quad\mathrm{and}\\
\label{w3}
\sum_{\nu\in N_a}(\dim \nu)^2 &= &  \abs{P_a}
\end{eqnarray}

Now we have all the ingredients necessary to prove our result on
wreath products.

\begin{theorem} \label{wreath2}
Let $H$ be a finite group. Let $n\ge1$ be an integer.
Then $H$ is split
if and only if $H_n=H\wr\Sym_n$ is split.
\end{theorem}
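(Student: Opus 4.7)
The plan is to use the family $\{Z(a,\nu)\}$ of $\QQ H_n$-modules constructed above, via a matching of counts of irreducibles and a dimension-squared identity, to prove the forward direction, and to use a Schur-index argument on a specific induced $\CC H_n$-module for the reverse.

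Forward direction. Assume $H$ is split. Then each $M_i$ is absolutely irreducible with $d_i=\dim_\CC M_i$, and $\sum_i d_i^2=|H|$. The $\QQ$-dimension of $Z(a,\nu)$ is $|\Sn:P_a|\cdot\prod_i d_i^{a_i}\cdot\dim\nu$, where $\dim\nu$ denotes the dimension of the $P_a$-irreducible $\nu$. Using (\ref{w3}) and the multinomial theorem,
\begin{align*}
\sum_{a,\nu}(\dim Z(a,\nu))^2
&=\sum_a|\Sn:P_a|^2\,|P_a|\prod_i d_i^{2a_i}\\
&=n!\sum_a\binom{n}{a_1,\dots,a_k}\prod_i(d_i^2)^{a_i}
=n!|H|^n=|H_n|.
\end{align*}
Moreover, $H$ being split is in particular rational, so by the classical parametrization of complex irreducibles of a wreath product by multi-partitions, the count $\#\{Z(a,\nu)\}=\sum_a\prod_i p(a_i)$ equals $\#\Irr_\CC H_n$. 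Since the $Z(a,\nu)$ are distinct $\QQ$-irreducibles, $\#\Irr_\QQ H_n\ge\#\{Z(a,\nu)\}=\#\Irr_\CC H_n\ge\#\Irr_\QQ H_n$, forcing equality throughout. Thus the $Z(a,\nu)$ biject with the complex irreducibles, and writing $\dim_\QQ Z(a,\nu)=m_X\cdot\dim_\CC X$ for the associated $\CC$-irreducible $X$ with Schur index $m_X$, the identity $\sum_X m_X^2(\dim X)^2=|H_n|=\sum_X(\dim X)^2$ forces $m_X=1$ for every $X$. Hence every complex irreducible of $H_n$ is realized over $\QQ$, so $H_n$ is split.

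Reverse direction. Assume $H_n$ is split, and let $\pi$ be any nontrivial complex irreducible of $H$. Consider
\[
\phi = \operatorname{Ind}_{H\times H_{n-1}}^{H_n}\bigl(\pi\boxtimes 1_{H_{n-1}}\bigr),
\]
where $H\times H_{n-1}$ is the Young subgroup of $H_n$ stabilizing the first coordinate. Since $\pi\neq 1$, the $\Sym_n$-stabilizer of $\pi\boxtimes 1^{n-1}$ is exactly $\Sym_{n-1}$, so $\phi$ is an irreducible $\CC H_n$-module of dimension $n\dim\pi$. The standard fact that induction of irreducibles preserves the Schur index over $\QQ$, together with the observation that tensoring with the trivial one-dimensional $H_{n-1}$-representation does not change the Brauer class, gives $\mathrm{SI}(\phi)=\mathrm{SI}(\pi)$. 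Since $H_n$ is split, $\mathrm{SI}(\phi)=1$ and hence $\mathrm{SI}(\pi)=1$. Rationality of $\chi_\pi$ follows from rationality of $\chi_\phi$: the induced-character formula gives $\chi_\phi(h,1,\dots,1;\mathrm{id})=\chi_\pi(h)+(n-1)\dim\pi$, which must lie in $\QQ$, so $\chi_\pi(h)\in\QQ$ for all $h\in H$. Hence $\pi$ is realized over $\QQ$, and $H$ is split.

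The main technical input is the assertion---stated just after (\ref{w3}) and used in the forward direction---that each $Z(a,\nu)$ is an irreducible $\QQ H_n$-module. In the split case this is a clean consequence of Clifford theory: each $M_i$ is absolutely irreducible over $\QQ$ and each $\Sym_{a_i}$ is split, so the endomorphism ring of the inducing module $\bigotimes_i W(M_i,N_i)$ is $\QQ$ and the induction preserves irreducibility. The reverse direction sidesteps this entirely by working only with induced $\CC H_n$-modules, where irreducibility is immediate from the stabilizer calculation.
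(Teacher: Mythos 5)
Your forward direction is essentially the paper's argument: construct the modules $Z(a,\nu)$ and show by a sum-of-squares count that they exhaust everything. Your version is in fact slightly more careful than the paper's display (you keep the factors $\prod_i d_i^{2a_i}$, so the total correctly comes out to $n!\,|H|^n=|H_n|$), and your extra counting step comparing $\#\Irr_{\QQ}$ with $\#\Irr_{\CC}$ is a legitimate way to close the argument without asserting absolute irreducibility of the $Z(a,\nu)$ up front. So that half is fine.

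The reverse direction, however, contains a genuine gap. You deduce $\mathrm{SI}(\phi)=\mathrm{SI}(\pi)$ from ``the standard fact that induction of irreducibles preserves the Schur index over $\QQ$.'' There is no such fact, and it is false in general, even in the exact Clifford-correspondence situation you are in: take $C_4<Q_8$ and a faithful linear character $\lambda$ of $C_4$; then $\lambda$ has Schur index $1$, its inertia group is $C_4$, and the induced character $\lambda^{Q_8}$ is irreducible with Schur index $2$ over $\QQ$. The remark about tensoring with the trivial representation not changing a Brauer class does not repair this, since the issue is precisely how the Brauer class behaves under induction when the character field changes. Fortunately your construction can be salvaged without any such principle: once you know (as you correctly argue) that $\chi_\pi$ is rational-valued, restrict a $\QQ$-form of $\phi$ to the first-coordinate copy of $H$. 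Since $\phi|_{H^n}=\bigoplus_i 1\boxtimes\cdots\boxtimes\pi\boxtimes\cdots\boxtimes 1$, this restriction has character $\chi_\pi+(n-1)(\dim\pi)\,1_H$, in which $\pi\ne 1_H$ occurs with multiplicity exactly $1$; Lemma~\ref{Schur index} then gives $m_\QQ(\pi)\mid 1$, so $\pi$ is split. Note also that the paper's own reverse direction is much shorter and avoids induction altogether: for an irreducible $\QQ H$-module $V$, the module $W(V,\mathrm{triv})=V^{\otimes n}$ is an irreducible $\QQ H_n$-module, hence absolutely irreducible because $H_n$ is split, and absolute irreducibility of $V^{\otimes n}$ forces that of $V$.
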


\begin{proof}
In one direction, assume that $H_n$ is split and let 
$V$ be an irreducible  $\QQ H$-module.
Form the irreducible $\QQ H_n$-module $W(V,\rm{triv}) = V^{\otimes n}$.
Since $W(V,\rm{triv})$ is absolutely irreducible, so is $V$.

Conversely, suppose that $H$ is split.
Enumerate as above the irreducible $\QQ H$-modules.
We have
\begin{eqnarray*}
\sum_a\sum_{\nu\in N_a} \dim Z(a,\nu)^2 &= & \sum_a \sum_{\nu}
(|\Sn:P_a|\ \dim \nu)^2\\
&=&  \sum_a |\Sn:P_a|^2\sum_{\nu}(\dim \nu)^2\\
&=&  \abs{\Sn} \sum_a |\Sn:P_a|\\
&=& n!\sum_a  \frac{n!}{a_1!\cdots a_k!}\\
&= & n!k^n,
\end{eqnarray*}
where the last equality follows from the multinomial formula.
Since $n!k^n = \abs{H_n}$,
we see that every irreducible complex representations of $H_n$
is equal to one of the $Z(z,\nu)$.
Since these are all split, the theorem is proved.
\end{proof}

\begin{corollary} \label{sylowwreath}
Let $G$ be a finite group that is split.
Let $m\ge1$ be a power of $2$, $S$ a Sylow 2-subgroup of $\Sym_m$.
Let $G \wr S$ be the wreath product given by the
permutation action coming from $S\le \Sym_m$.
Then  $G \wr S$ is split.
\end{corollary}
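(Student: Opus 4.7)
The plan is to proceed by induction on $k$, where $m = 2^k$. The base case $k = 0$ is immediate: then $m = 1$, $S$ is trivial, and $G \wr S = G$ is split by hypothesis. For the inductive step I would use the classical description of a Sylow $2$-subgroup $S_k$ of $\Sym_{2^k}$ as the $k$-fold iterated wreath product of copies of $\Sym_2$; equivalently, $S_k \cong S_{k-1} \wr \Sym_2$, where $S_{k-1}$ is a Sylow $2$-subgroup of $\Sym_{2^{k-1}}$ acting on $2^{k-1}$ points, and the outer $\Sym_2$ swaps the two blocks of size $2^{k-1}$ inside $\{1,\dots,2^k\}$.

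The key identification is the associativity of the wreath product with respect to the underlying permutation actions:
\[
G \wr S_k \;\cong\; (G \wr S_{k-1}) \wr \Sym_2.
\]
Both sides are naturally isomorphic to $G^{2^k}$ extended by $(S_{k-1} \times S_{k-1}) \rtimes \Sym_2$ acting in the evident block-permutation manner on the $2^k$ coordinates. Granting this identification, the induction hypothesis gives that $G \wr S_{k-1}$ is split, and Theorem~\ref{wreath2}, applied with $H = G \wr S_{k-1}$ and $n = 2$, yields that $(G \wr S_{k-1}) \wr \Sym_2$, hence $G \wr S_k$, is split. This completes the induction.

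There is no real obstacle: the substance of the corollary lies entirely in Theorem~\ref{wreath2} (that splitness is preserved by wreathing with a symmetric group), and the only remaining task is to decompose $S$ so that this theorem can be applied $k$ times in succession. The one thing that genuinely bears checking is the wreath-product associativity displayed above, and this is a routine unwinding of the two semidirect-product structures.
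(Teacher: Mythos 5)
Your proposal is correct and follows essentially the same route as the paper: induction on the exponent, the decomposition $S \cong T \wr \Sym_2$ with $T$ a Sylow $2$-subgroup of $\Sym_{2^{k-1}}$, the wreath-product associativity $G \wr S \cong (G \wr T) \wr \Sym_2$, and an application of Theorem~\ref{wreath2} with $n=2$. No further comment is needed.
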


\begin{proof} Set $m=2^r$. Then $r\ge0$.
If $r=0$ there is nothing to show.
If $r= 1$ then $\Sym_m$ is a $2$-group,
so the result follows from Theorem~\ref{wreath2}.
We proceed by induction on $r>1$. Then,
$S = T \wr (\ZZ/2\ZZ)$, where $T$ is a Sylow $2$-subgroup of
$\Sym_{2^{r-1}}$.
Thus $G \wr S \cong (G \wr T) \wr (\ZZ/2\ZZ)$. The result follows by
induction and the case $r=1$.
\end{proof}

\section{Division algebras} \label{division}
We record some known results about division algebras that
are needed for Section~\ref{indices} on Schur indices.

Let $K$ be field.
The collection $\Br(K)$ of all (isomorphism classes of) division algebras
that are finite-dimensional and central over $K$ has the
structure of additive group. The identity element is $K$ itself and
the inverse of $D$ is $D^{\mathrm{opp}}$, the opposite algebra.
This group law can be characterized be the rule
$D+D'=D''$ whenever
\begin{equation} \label{braueraddition}
D\otimes D'\cong \Mat_n(D'')
\end{equation}
for some $n$.
(Such a $D''$ and an $n$ always exist.)

If $L$ is an extension of $K$, then there is a map $e_{L/K}:\Br(K)\to
\Br(L)$ characterized by the property: $e_{L/K}(D)=D'$ if
$D\otimes_KL=M_n(D')$ for some $n$. (Such a $D'$ and an $n$ always
exist.)

Suppose $K$ is a finite extension of $\QQ$.
Let $\MM$ denote the set of places of $K$,
including the infinite places.
Let $K_v$ denote the completion of $K$ at the place $v\in\MM$.
Let $k$ be one of the $K_v$'s.

It is known that $\Br(k) = \QQ/\ZZ$ if $k$ is
non-Archimedean, $\Br(k)=\frac12\ZZ/\ZZ$ if $k$ is the real field, and
$\Br(k)=\{0\}$ if $k$ is the complex field.

Let $D$ be a finite-dimensional division algebra with center $K$.
The local-global theory of division algebras over a number field
assigns to each place $v$ in $\MM$ a
local invariant $\inv_v(D)\in \Br(K_v)$.

The local invariants enjoy the following properties.
\begin{enumerate}

\item For each $D$, $\inv_v(D)$ is nonzero
for at most finitely many $v\in \MM$.

\item For each $D$,
the sum $\sum_{v\in \MM} \inv_v(D)$, which is well-defined by~(1),
is equal to $0$.

\item Let $(\epsilon_v)_{v\in \MM}$ be any collection of local
invariants $\epsilon_v\in \Br(K_v)$ that satisfy (1) and~(2).
Then there exists a $D$ such that
$\inv_v(D) = \epsilon_v$ for all $v\in\MM$.

\item If $\epsilon_v(D) = \epsilon_v(D')$ for all $v\in\MM$, then
$D$ is isomorphic to $D'$.
\end{enumerate}

Suppose $|L:K|<\infty$. The local invariants of $D$ and
of $e_{L/K}(D)$ are related as follows.
Let $w$ be a place of $L$ lying over the place $v$ of
$K$, and let $L_w$ denote the completion of $L$ at $w$.
Then $\inv_w(e_{L/K}(D)) = \abs{L_w:K_v}\inv_v(D)$.

Let now $D$ be a finite-dimensional division algebra $D$ with center
$F$, $F$ an arbitrary field. There exists a maximal commutative
subalgebra $A$ of $D$. The dimension of such an $A$ is called
the \mathdef{degree} of $D$. This does not depend on the choice of $A$,
since $(\dim_F A)^2= \dim_F D$. The rank of $F$ itself is $1$, and
a division algebra of rank~$2$
is a \mathdef{quaternion algebra}.

\begin{lemma} \label{ind-exp}
Let $D$ be a finite-dimensional division algebra with center $K$,
a finite extension of $\QQ$.
Then $D\cong D^{\mathrm{opp}}$ if and
only if $D$ has order $\le2$ in the Brauer group $\Br(K)$.
If this condition holds, then $D$ is a
quaternion algebra or $D=K$.
\end{lemma}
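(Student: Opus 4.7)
The plan is to separate the two assertions: the biconditional is a formal consequence of the Brauer-group structure, while the structural conclusion follows from the local-global theory summarized in the excerpt.

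For the biconditional, the excerpt already records that $D^{\mathrm{opp}}$ is the inverse of $D$ in $\Br(K)$; this is immediate from the defining rule~\eqref{braueraddition} applied to $D\otimes D^{\mathrm{opp}}$. Since a Brauer class over $K$ contains a unique finite-dimensional division algebra (the $D''$ in the defining rule), the condition $D \cong D^{\mathrm{opp}}$ is equivalent to $[D] = -[D]$ in $\Br(K)$, which is precisely the condition that $[D]$ have order at most $2$.

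For the structural claim, assume $[D]$ has order $\le 2$. Then each local invariant $\inv_v(D)$ has order $\le 2$ in $\Br(K_v)$. Reading off the list of local Brauer groups given in the excerpt, this forces $\inv_v(D) = 0$ at complex places, poses no constraint at real places since $\Br(K_v) = \frac{1}{2}\ZZ/\ZZ$ already has order~$2$, and restricts $\inv_v(D)$ to $\{0,\tfrac{1}{2}\} \subset \QQ/\ZZ$ at non-Archimedean places. In every case, the division algebra over $K_v$ representing $\inv_v(D)$ has degree $1$ or $2$, because over a local field the order of a Brauer class equals the degree of the underlying division algebra. I would then invoke the Albert-Brauer-Hasse-Noether theorem, which says that the degree of $D$ equals the least common multiple of these local degrees; consequently $\deg D \le 2$, and so $\dim_K D = (\deg D)^2 \in \{1,4\}$, giving either $D = K$ or $D$ a quaternion algebra.

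The one nontrivial input is the last step: the identification of the global degree (i.e.\ the Schur index of $D$) with the LCM of the local degrees. This is a deep classical theorem but is in keeping with the style of the section, which records such local-global facts for later use without proof; everything else in the argument is bookkeeping with the list of local Brauer groups.
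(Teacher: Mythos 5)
Your proof of the biconditional matches the paper's: both reduce it to the fact that $D^{\mathrm{opp}}$ represents the inverse class, so $D\cong D^{\mathrm{opp}}$ iff $[D]=-[D]$. For the structural claim, however, you take a genuinely different route. The paper stays within the facts it records in Section~\ref{division}: since every local invariant lies in $\{0,\tfrac12\}$ and the nonzero ones occur at an even number of places, none of them complex, weak approximation produces $x\in K$ that is a nonsquare in $K_v$ for every such place, and then $L=K(\sqrt{x})$ kills all local invariants (using the rule $\inv_w(e_{L/K}(D))=\abs{L_w:K_v}\inv_v(D)$ and the uniqueness property (4)); thus $L$ is a quadratic splitting field and the bound $\dim_K D\le \abs{L:K}^2$ forces $D=K$ or $D$ a quaternion algebra. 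You instead invoke two heavier classical inputs: that over a local field the order of a Brauer class equals the index of its division algebra, and the Albert--Brauer--Hasse--Noether statement that the global index is the least common multiple of the local indices; from these the conclusion $\deg D\le 2$ is immediate. Both arguments are correct. What the paper's approach buys is economy of citation --- it needs only the invariant formalism already listed, weak approximation, and the standard splitting-field bound on the index --- whereas your approach is shorter once the local-global index theorem is granted, but that theorem is strictly stronger than anything the section sets up, so you are importing more than the paper does (as you yourself note).
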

\begin{proof}
The first statement follows from the definition of inversion in the
Brauer group.
The condition holds if and only if each local invariant is equal to
$0$ or $1/2$. Let $(\epsilon_v)_{v\in \MM}$ be a collection of local invariants
$\epsilon_v\in \Br(K_v)$ that satisfy (1) and (2) and such
that for all $v$, $\epsilon_v\in \frac12\ZZ/\ZZ$.
We may assume that not all the local invariants are zero.

Let $S$ be the set of places of $K$ such that $\epsilon_v$ is nonzero.
By (1) and (2), $S$ is a finite set of even cardinality, and
if $v\in S$ then $K_v$ is not the complex field.
By weak approximation, there is $x$ in $K$ whose image in $K_v$ is a
nonsquare for each $v\in S$.
Let $L=K(\sqrt{x}).$ Then $|L_w:K_v|=2$ for each $v\in S$,
where $L_w$ is the completion of $L$ at a place $w$ lying over $v$.
It follows that all the local invariants of $D\otimes_K L$ are
trivial. By (3) applied to $L$, we see
that $D\otimes_K L\cong M_n(L)$ for some $n$.
If $L$ is a splitting field for $D$, then $\dim D\le |L:K|^2.$
\end{proof}

In the case $K=\QQ$ one can be much more explicit.
Now let $\MM$ be the set of places of $\QQ$.
Let $(\epsilon_v)_{v\in\MM}$ be a collection of local invariants
$\epsilon_v\in\frac12\ZZ/\ZZ$, at most finitely
many of which are nonzero, such that $\sum_{v\in\MM}\epsilon_v=0$.
One can show directly that a quaternion algebra over $\QQ$ exists
with local invariants $(\epsilon_v)$ by appealing to Dirichlet's theorem.

For $a,b$ nonzero elements of $\QQ$, one constructs the quaternion algebra
$(a,b)$. Its local invariant at the place $v$ is easily
calculated using the Hilbert symbol
\begin{equation} \label{hilbertsymbol}
\epsilon_v((a,b))= \bra{a,b}_v.
\end{equation}
A good reference for the Hilbert symbol is \cite[Chapter III]{serre}.

Let $S$ be the set of places $v$ of $\QQ$ such that $\epsilon_v$ is nonzero.
We assume $S$ contains $2$ and the infinite place,
and leave the other cases, which are treated similarly, to the reader.

Set $T = S\setminus\{2,\infty\}$.
Choose for each prime $p\in T$ a nonsquare $a_p$ in
$\ZZ/p\ZZ$.
By Dirichlet's theorem and the Chinese remainder theorem, there exists
a positive prime number $q$ such that
\begin{eqnarray}
q  & \equiv & -a_p \pmod p \qquad\mbox{for all $p\in T$, and}\\
q & \equiv & 3 \pmod 8.
\end{eqnarray}

Let $t=\prod_{p\in T} p.$
Now the quaternion algebra $D' = (-2t,-q)$ does the trick.
We have $\inv_{\infty}(D')=\bra{-2t,-q}_{\infty}$ is nonzero, since $t,q>0$.
For $p\in T$,  we have $\inv_{p}(D') = \bra{-2t,a_p}=1/2$.
At the prime $2$, we have $\inv_2(D')=\bra{-2t,-q}_2 =  \bra{-2t,5}_2
=\bra{2,5}_2\bra{-t,5}_2=1/2$.
Clearly, $\inv_v(D')=0$ for places $v$ not in $S\cup \{q\}$.
We have just seen that $D'$ has invariant $1/2$ at each place $v\in S$.
Therefore, since the sum of the local invariants
is $0$, and $\abs{S}$ is even, it follows that $\inv_q(D')=0.$

Finally, it follows by (4) that $D'\cong D$ for any division algebra
$D$ with local invariants $\inv_v(D)=\epsilon_v$.

\section{Schur indices} \label{indices}
The relation to Schur indices is as follows.
Let $\chi$ be an irreducible complex character of the finite group
$G$.
Let $K$ be an extension of $\QQ$ that contains the values of $\chi$.

The \mathdef{Schur index} of $\chi$ over $K$
is the least positive integer $m_{\chi}$ such
that $m_{\chi}\chi$ is the trace of a $K G$-module.
By definition, $m_{\chi}=1$ if and only if $\chi$ itself
is afforded by a $K G$-module.
We observe that for any positive integer $n$,
$n\chi$ is the trace of a $K G$-module if and only if
$n$ is a multiple of $m_{\chi}$.

It is clear that a $KG$-module of trace $m_{\chi}\chi$ is irreducible.
Let $M$ be a $KG$-module of trace $m_{\chi}\chi$.
By Schur's Lemma, $D=\End_K(M)$
is a division algebra. The Schur index $m_{\chi}$ is the degree of $D$.

\begin{lemma} \label{Brauer-Speiser}
(Brauer~\cite{brauer}, Speiser~\cite{speiser})
If  $\chi$ is real-valued,  then $m_{\chi} \le 2$.
\end{lemma}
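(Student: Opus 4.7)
The plan is to identify the division algebra attached to $\chi$ with one that is its own opposite, and then to invoke Lemma~\ref{ind-exp} to force its degree down to at most~$2$.

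First I would reduce to the case where $K$ is a finite extension of~$\QQ$, specifically $K=\QQ(\chi)$; this is legitimate because the Schur index can only decrease as $K$ grows, and character values always lie in a cyclotomic field. Fix a simple $KG$-module $M$ affording the character $m_{\chi}\chi$, and set $D=\End_{KG}(M)$. By Schur's lemma, $D$ is a finite-dimensional division algebra, its center is~$K$, and its degree equals $m_{\chi}$, so it is enough to bound this degree.

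The next step is to bring in the hypothesis that $\chi$ is real-valued. I would form the contragredient module $M^{*}=\Hom_{K}(M,K)$. Its character at $g$ is $m_{\chi}\chi(g^{-1})=m_{\chi}\overline{\chi(g)}=m_{\chi}\chi(g)$, so $M^{*}$ and $M$ have the same trace. Since characters determine $KG$-modules up to isomorphism in characteristic~$0$, this yields $M\cong M^{*}$. The transpose of endomorphisms furnishes a canonical antiisomorphism $\End_{KG}(M^{*})\cong \End_{KG}(M)^{\mathrm{opp}}$, so combining the two gives $D\cong D^{\mathrm{opp}}$.

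From here the proof is a direct appeal to Lemma~\ref{ind-exp}: $D$ must be either $K$ itself or a quaternion algebra over~$K$, both of which have degree at most~$2$. Hence $m_{\chi}\le 2$. The only step that requires care is the pairing of $M\cong M^{*}$ with the natural antiisomorphism $\End_{KG}(M^{*})\cong \End_{KG}(M)^{\mathrm{opp}}$; this is the conceptual heart of the argument, while the remainder is bookkeeping and citation.
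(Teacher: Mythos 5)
Your proposal is correct and is essentially Fein's argument reproduced in the paper: realize $m_{\chi}\chi$ on a simple $KG$-module over a number field containing the character values, observe that real-valuedness forces the module to be isomorphic to its dual so that $D\cong\End_{KG}(M)\cong\End_{KG}(M^{*})\cong D^{\mathrm{opp}}$, and then apply Lemma~\ref{ind-exp}. The only cosmetic difference is your explicit reduction to $K=\QQ(\chi)$, where the paper simply fixes a totally real number field containing the values of $\chi$.
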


\begin{proof}{(Fein~\cite{fein}).}
Let $K$ be a totally real number field containing the values of $\chi$.
Let $m=m_{\chi}$.

Let $V$ be an irreducible $KG$-module whose trace is $m\chi$.
Set $D = \mathrm{End}_{KG}(V)$.
We are required to show that the degree of $D$ over $K$ is at
most~$2$.
In view of Lemma~\ref{ind-exp}, it suffices to show $D$ is isomorphic to its opposite.

However, this follows easily from the fact that $\chi$ is real-valued.
Let $V' = \Hom_{K}(V,K)$ be the vector space dual to $V$. Endow $V'$
with a $G$-action by declaring
$(g\cdot \lambda)(v) = g^{-1}(\lambda v)$.
The character of $V'$ is  $m\bar{\chi} = m\chi$ since $\chi$ is
real-valued.
Clearly, $\End_{KG}(V')$ is isomorphic to
the opposite algebra to $D$, and consequently
$D\simeq \End_{KG}(V)\simeq \End_{KG}(V') \simeq D^{\mathrm{opp}}$.
\end{proof}

\begin{lemma} \label{Schur index}
Let $\chi$ be an irreducible complex character of the finite group~$G$.
Let $K$ be an extension of $\QQ$ that contains the values of $\chi$.
If $\rho$ is a character of a $KG$-module, 
then $m_{\chi}| (\chi, \rho)$.
\end{lemma}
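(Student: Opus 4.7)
The plan is to choose a $KG$-module $M$ with character $\rho$, decompose $M$ into irreducible $KG$-modules, and show that $\chi$ appears in each summand with multiplicity that is either $0$ or exactly $m_\chi$; the divisibility will then be immediate by additivity of characters.

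First I would write $M \cong \bigoplus_j V_j^{n_j}$, where $V_1, V_2, \ldots$ are pairwise non-isomorphic irreducible $KG$-modules and $n_j \ge 0$. Next I would invoke the standard description of the complex character of an irreducible $KG$-module $V_j$: there is an irreducible complex character $\psi_j$ and a Galois orbit $\{\psi_j = \psi_{j,1}, \ldots, \psi_{j,r_j}\}$ of $\psi_j$ under $\Gal(\overline{K}/K)$ such that the trace of $V_j \otimes_K \CC$ as a $\CC G$-module equals $m_{\psi_j}(\psi_{j,1} + \cdots + \psi_{j,r_j})$, where $m_{\psi_j}$ is the Schur index of $\psi_j$ over $K$. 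This is exactly the statement, already used implicitly in the discussion of Schur indices preceding the lemma, that the isotypic component of any irreducible complex character in an irreducible $KG$-module is determined by the Schur index and the Galois action.

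Finally, since $K$ contains the values of $\chi$, the element $\chi$ is fixed by $\Gal(\overline{K}/K)$, so its Galois orbit over $K$ is $\{\chi\}$. Therefore $\chi$ appears in the complex character of $V_j$ with multiplicity $m_\chi$ if $\psi_j = \chi$ (which happens for at most one index $j_0$) and with multiplicity $0$ otherwise. Hence
\[
(\rho,\chi) \;=\; \sum_j n_j \cdot [\psi_j = \chi]\cdot m_\chi \;=\; n_{j_0} m_\chi,
\]
where $[\,\cdot\,]$ denotes the Iverson bracket, and $m_\chi \mid (\rho, \chi)$.

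The only nontrivial step is the structural claim about complex characters of irreducible $KG$-modules, which is the main place one has to be careful. I would justify it by appealing to Wedderburn's theorem applied to the semisimple algebra $KG/\mathrm{rad}(KG)$ together with Galois descent from $\overline{K}$: each simple factor of $KG$ is a matrix algebra over a division algebra $D$ whose center is the field $K(\psi)$ generated by the values of an irreducible complex character $\psi$, with $[D : K(\psi)] = m_\psi^2$, and extending scalars from $K$ to $\overline{K}$ spreads each such factor into the $|\Gal(K(\psi)/K)|$ absolutely irreducible modules in the Galois orbit of $\psi$, each with multiplicity $m_\psi$. This is classical (see, e.g., the references to Brauer and Speiser already cited in the paper), so I would simply state it and proceed.
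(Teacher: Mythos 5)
Your argument is correct, but it follows a different (and heavier) route than the paper's. The paper's proof is a two-line isotypic-component argument: since $K$ contains the values of $\chi$, the central idempotent $e_\chi=\frac{\chi(1)}{|G|}\sum_{g}\chi(g^{-1})g$ lies in $KG$, so the $\chi$-isotypic part of a $KG$-module $M$ affording $\rho$ is itself a $KG$-module; its complex character is $(\chi,\rho)\chi$, and the divisibility $m_\chi \mid (\chi,\rho)$ is then immediate from the observation, made just before the lemma, that $n\chi$ is the trace of a $KG$-module if and only if $m_\chi \mid n$. You instead decompose $M$ into irreducible $KG$-modules and invoke the classical structure theorem that the complex character of an irreducible $KG$-module equals the Schur index times the sum of a Galois orbit of an irreducible character (Isaacs or Curtis--Reiner); since $\chi$ is $K$-rational its orbit is a singleton, and the count goes through as you say (including the point that the Schur index over $K$ agrees with that over $K(\psi)$, which you use implicitly). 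What your approach buys is a sharper picture of exactly which multiplicities can occur in each irreducible summand; what it costs is reliance on a substantially stronger classical theorem, whose standard proof already contains the isotypic-projection idea that the paper uses directly, so the paper's route is the more economical and self-contained one in this context.
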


\begin{proof}
Let $M$ be a $KG$-module
affording the character $\rho$.
The $\chi$-isotypic part of $M$ is defined over $K$.
The lemma follows from the observation.
\end{proof}

\begin{corollary} \label{useful}
Let $H$ be a subgroup of the finite group $G$.
Let $\chi\in\Irr(G)$ be rational-valued.
Let $\rho$ be a character of $H$ corresponding to a representation
defined over $\QQ$.
If the integer $(\rho_H^G,\chi)$ is odd, then $\chi$
is the trace of a representation defined over $\QQ$.
\end{corollary}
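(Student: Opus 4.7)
The plan is to combine the two preceding lemmas. Since $\chi$ is rational-valued, take $K=\QQ$ in the definition of the Schur index, and write $m=m_{\chi}$ for the Schur index of $\chi$ over $\QQ$. The goal is to show $m=1$.

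First, induction preserves definability over $\QQ$: if $\rho$ is afforded by a $\QQ H$-module $M$, then $\rho_H^G$ is afforded by the $\QQ G$-module $\QQ G\otimes_{\QQ H}M$. Hence $\rho_H^G$ is the character of a $\QQ G$-module, and Lemma~\ref{Schur index} applies with $K=\QQ$ to give that $m$ divides the inner product $(\rho_H^G,\chi)$. By hypothesis this inner product is odd, so $m$ is odd.

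On the other hand, a rational-valued character is certainly real-valued, so the Brauer--Speiser theorem (Lemma~\ref{Brauer-Speiser}) gives $m\le 2$. The only odd positive integer that is at most $2$ is $1$, so $m=1$. By the definition of the Schur index, this means $\chi$ itself is the trace of a $\QQ G$-module, which is what we wanted to prove.

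There is no real obstacle here; the corollary is essentially a formal consequence of Lemma~\ref{Schur index} (divisibility of multiplicities by the Schur index) together with Lemma~\ref{Brauer-Speiser} (the Schur index of a real-valued character is at most $2$), once one observes that induction of a $\QQ H$-module is a $\QQ G$-module.
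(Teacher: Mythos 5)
Your proof is correct and is exactly the argument the paper intends: the paper's proof of this corollary is a one-line citation of Lemmas~\ref{Brauer-Speiser} and~\ref{Schur index}, and you have simply filled in the routine details (induction of a $\QQ H$-module is a $\QQ G$-module, so $m_{\chi}$ divides the odd multiplicity, while Brauer--Speiser forces $m_{\chi}\le 2$, hence $m_{\chi}=1$).
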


\begin{proof}
This is a consequence of Lemmas
\ref{Brauer-Speiser} and~\ref{Schur index}.
\end{proof}

Let $K$ be a finite extension of $\QQ$ that contains the values of
$\chi$. Let $\MM$ be the set of places of $K$, including the infinite
places.
Since the completion $K_v$ contains $K$, hence a priori the values of
$\chi$,
it makes sense to compare the Schur index of $\chi$ over $K$
with the local Schur index of $\chi$ over $K_v$.

We record some basic facts.
See~\cite[Corollary 9.5]{feit82} for a proof of part (2).

\begin{lemma} \label{record}
Let $\chi$ be an irreducible complex character of the
finite group $G$.
\begin{enumerate}

\item The Schur index $m_{\chi}=1$ if and only if each of the
local Schur indices is equal to one. Somewhat more is true.
Let $v_0\in \MM$. If the local Schur index is one at each place $v\ne
v_0$ then so also is the Schur index at $v_0$ and the global Schur
index.

\item If $p$ is a prime and $p$ does not divide the order of $G$,
then the $p$-adic Schur index is~$1$.

\item The relation to the Frobenius-Schur indicator is as follows.
Let $K$ be the field generated over $\QQ$ by the values of $\chi$,
and let $k$ be the completion of $K$ at an infinite place.
The Frobenius-Schur indicator is $0$ if $k=\CC.$
The Frobenius-Schur indicator is $+1$ (resp.~$-1$)
if $k=\RR$ and the local Schur index at $\RR$ is~$1$
(resp.~$2$).
\end{enumerate}
\end{lemma}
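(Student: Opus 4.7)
The three statements are classical and all reduce to the local--global theory of division algebras developed in Section~\ref{division}; the approach is to interpret both the global and local Schur indices of $\chi$ as orders of Brauer classes.

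For part~(1), I would choose an irreducible $KG$-module $V$ affording $m_\chi\chi$ and set $D=\End_{KG}(V)$. By Schur's lemma, $D$ is a central division algebra over $K$, and by the definition of the Schur index its degree is $m_\chi$. Invoke the classical theorem (Brauer--Albert--Hasse--Noether) that over a number field the degree of a central division algebra equals the order of its class in $\Br(K)$; combined with the description of $\Br(K)$ recorded in Section~\ref{division}, this says that the order of $[D]$ in $\Br(K)$ is the least common multiple of the orders of the local invariants $\inv_v(D)$. The same reasoning applied locally identifies the local Schur index $m_{\chi,v}$ with the order of $\inv_v(D)$ in $\Br(K_v)$. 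Hence $m_\chi=1$ iff every $m_{\chi,v}=1$. The refined statement about $v_0$ is then immediate from property~(2) of Section~\ref{division}: if $\inv_v(D)=0$ for every $v\neq v_0$, the vanishing of the sum of local invariants forces $\inv_{v_0}(D)=0$ as well, so $m_{\chi,v_0}=1$ and thus $m_\chi=1$.

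For part~(2), I would simply quote \cite[Corollary 9.5]{feit82}, as the authors already indicate. The underlying point is that when $p\nmid|G|$, the group algebra $K_vG$ with $v\mid p$ is unramified enough that no Brauer obstruction to rationality can arise.

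For part~(3), first note that the values of $\chi$ lie in $\QQ(\zeta_{|G|})$, so $K$ is Galois over $\QQ$ and is either totally real (precisely when $\chi$ is real-valued) or a CM field. Hence $K_v=\CC$ at some (equivalently every) infinite place iff $\chi$ fails to be real-valued, in which case the Frobenius--Schur indicator is $0$ by its classical definition. If instead $K_v=\RR$, then $\chi$ is real-valued and $D\otimes_K\RR$ is a central simple $\RR$-algebra; since $\Br(\RR)=\tfrac12\ZZ/\ZZ$, it is either a matrix algebra over $\RR$ (local Schur index $1$) or a matrix algebra over the Hamilton quaternions $\mathbb{H}$ (local Schur index $2$). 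The classical identification of the FS indicator with the type of the associated real endomorphism algebra (orthogonal versus symplectic) then gives FS indicator $+1$ in the first case and $-1$ in the second. The main obstacle is not conceptual: each part reduces to a well-known classical theorem, and the only delicate point is the index-equals-exponent input for number fields used in part~(1), on which the rest of the argument is built.
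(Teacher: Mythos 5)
Your proposal is correct. Note, though, that the paper does not actually prove this lemma: it is stated as a record of known facts, with only a pointer to \cite[Corollary 9.5]{feit82} for part (2), so there is no argument to compare yours against line by line. What you have written is the standard proof, and it fits the framework the paper sets up in Section~\ref{division}: identify $m_\chi$ with the index of $D=\End_{KG}(V)$, use the Albert--Brauer--Hasse--Noether theorem (index equals the order of the Brauer class, which is the least common multiple of the orders of the local invariants) to get the equivalence in (1), and use the reciprocity relation $\sum_v \inv_v(D)=0$ to get the sharper one-place statement; part (3) is the classical Frobenius--Schur dichotomy at the real place, and your observation that $K\subseteq\QQ(\zeta_{|G|})$ is abelian over $\QQ$, hence totally real or totally imaginary, correctly justifies the case split on $k$. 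The only step you pass over lightly is the identification of the local Schur index $m_{\chi,v}$ with the index of $D\otimes_K K_v$ (equivalently, for the places in question, with the order of $\inv_v(D)$); this is standard (the simple component of $K_vG$ attached to $\chi$ is obtained from the component of $KG$ by extension of scalars), but if you want a self-contained write-up you should cite it explicitly, e.g.\ from Feit's book or Yamada's notes on Schur indices, just as the paper cites Feit for (2).
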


\begin{lemma} \label{2-groups} Let $S$ be a \mbox{$2$-group}.
Suppose that $\chi$ in $\Irr(S)$ has rational values and
Frobenius-Schur indicator~$+1$. 
Then there is a $\QQ S$-module with
trace~$\chi$.
\end{lemma}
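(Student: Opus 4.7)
The plan is to show that the Schur index $m_{\chi}$ over $\QQ$ equals $1$, by computing all local Schur indices and invoking Lemma~\ref{record}(1). Since $\chi$ is rational-valued, we may take $K=\QQ$, so the places in $\MM$ are the infinite place (with completion $\RR$) together with the finite primes (with completions $\QQ_p$).

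First I would dispose of the odd primes: if $p$ is an odd prime, then $p$ does not divide $|S|$ (since $S$ is a $2$-group), so by Lemma~\ref{record}(2) the local Schur index of $\chi$ at $p$ is $1$. Next I would handle the infinite place: $\chi$ is rational-valued, hence real-valued, so the completion of $\QQ$ at $\infty$ is $\RR$. Since the Frobenius--Schur indicator of $\chi$ is $+1$, Lemma~\ref{record}(3) gives that the local Schur index at $\RR$ is $1$.

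At this point the only place possibly contributing a nontrivial local Schur index is $v_0 = 2$. But the "somewhat more" assertion of Lemma~\ref{record}(1) says exactly that if every local Schur index away from a single place $v_0$ is $1$, then the local Schur index at $v_0$ (and hence the global Schur index) is also $1$. Applying this with $v_0 = 2$, we conclude $m_{\chi} = 1$, so $\chi$ is afforded by a $\QQ S$-module.

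There is no real obstacle here beyond recognizing which pieces of Lemma~\ref{record} to apply; the key observation is that the triviality of the $2$-adic Schur index, which is the only nonobvious local index for a $2$-group, is forced for free by the product formula once every other local invariant is shown to vanish.
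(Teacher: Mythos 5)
Your argument is correct and is essentially the paper's own proof: trivial local indices at odd primes via Lemma~\ref{record}(2), triviality at the real place via the Frobenius--Schur indicator and Lemma~\ref{record}(3), and then the ``one exceptional place'' clause of Lemma~\ref{record}(1) with $v_0=2$, finishing with $K=\QQ$ by rationality of the values. No differences worth noting.
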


\begin{proof}
Since $\chi$ is real-valued,
the local Schur index of $\chi$ at the real place is $1$.
By Lemma~\ref{record}(2), for all odd  primes $p$,
the local Schur index at $p$ is $1$. Hence,
by Lemma~\ref{record}(1) with $v_0=2$, we see that
$m_{\chi}=1$. It follows that $\chi$ is afforded by a $KS$-module,
where $K$ is the field generated over $\QQ$ by the values of $\chi$.
But by hypothesis, $K=\QQ$.
\end{proof}

We also need the following:

\begin{lemma} \label{index 2} Let $N$ be a subgroup of
index~$2$ in the finite group $G$.  Let $V$
be an irreducible $\CC N$-module with character $\chi$.
Let $W$ be an irreducible
constituent of $V_N^G$.  If $\chi$ takes values
in $K$ and $W$ is defined over $K$, then $V$ is defined
over $K$.
\end{lemma}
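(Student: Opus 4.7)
The plan is to invoke Clifford theory for the index-two subgroup $N \le G$ and split into two cases depending on whether $V$ is $G$-stable.

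First, consider the case where $V$ is stable under conjugation by $g \in G \setminus N$. Because $G/N$ is cyclic of order $2$, every $G$-stable irreducible $\CC N$-module extends to $G$, and $V_N^G$ decomposes as a direct sum of two non-isomorphic irreducible $\CC G$-modules $W_1, W_2$, each of whose restriction to $N$ is isomorphic to $V$. Thus the given constituent $W$ restricts to $V$. If $W$ is afforded by a $KG$-module $M$, then $M$ regarded as a $KN$-module is a $K$-form of $V$, and we are done.

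The substantive case is when $V$ is not $G$-stable. Then $V_N^G$ is already irreducible, so $W = V_N^G$ and $W_N \cong V \oplus V^g$ with $V \not\cong V^g$ as $\CC N$-modules. Let $M$ be a $KG$-module affording the character of $W$, and view $M$ as a $KN$-module. Since $\chi$ takes values in $K$, the central primitive idempotent of $\CC N$ associated to $V$, namely
\[
e_V = \frac{\dim V}{|N|} \sum_{n\in N} \chi(n^{-1}) n,
\]
actually lies in $KN$. Applying $e_V$ to $M$ produces a $KN$-submodule $e_V M$ whose complexification is the $V$-isotypic component of $W_N$, which is a single copy of $V$. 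Hence $e_V M$ is a $K$-form of $V$.

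The key observation making everything work is that $\chi$ being $K$-valued forces the $V$-isotypic projector to be $K$-rational, so the $V$- and $V^g$-components of $W_N$ can be separated over $K$. I expect this rationality of $e_V$ to be the only subtle point; once it is in hand, both cases are immediate from Clifford theory, and no further hypotheses on $K$ (such as characteristic or splitting properties) are required.
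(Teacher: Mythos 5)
Your proof is correct and takes essentially the same route as the paper: the identical Clifford-theoretic case split on whether $V_N^G$ is irreducible, with the stable case handled by restricting a $K$-form of $W$ to $N$. In the non-stable case you simply inline the key fact that the $\chi$-isotypic projector lies in $KN$ when $\chi$ is $K$-valued, which is exactly the content (and proof) of the paper's Lemma~\ref{Schur index} combined with Frobenius reciprocity giving multiplicity one, so the substance coincides.
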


\begin{proof}  Let $M=V_N^G$ be the induced module.
Either $M$ is irreducible or a
sum of two distinct irreducible modules.
In the former case, $M=W$, and in the latter case,
$M$ is the sum of $W$ and a twist of $W$.
In particular, since $W$ is defined over $K$,
so also is $M$.

If $M$ is not irreducible, then $M|_N \cong V$ is defined
over $K$ by assumption.
If $M$ is irreducible, then $(\chi_N^G, \chi_N^G)=1,$
and so, by Frobenius reciprocity and
by Lemma~\ref{Schur index}, $m_{\chi}$ divides $(\chi,(\chi_N^G)|_N)=1$.
Since the trace of $V$ takes values in~$K$,
it follows via Lemma~\ref{index 2} that $V$ is defined over~$K$.
\end{proof}

\section{Classical Weyl groups} \label{classical}
We remark that there are many proofs that the symmetric group $\Sn$ 
is split. For one such see~\cite{james}. 

Let $G$ be a finite group.
We note that all of the
irreducible complex characters of $G$ are rational-valued if and only if,
for all $g\in G$ and all integers $r$ coprime to $|G|$, there
exists $y \in G$ such that $g^r=g^y$.
Similarly, all of the irreducible complex characters of $G$ are
real-valued if and only if $g$ is conjugate to
$g^{-1}$ for all $g\in G$.
We will need the following lemma.

\begin{lemma} \label{conjugacy}
Let $G$ be a finite group all of whose irreducible complex characters
are rational-valued. Let $G_0<G$ be a subgroup of index~$2$.
Let $g\in G$, and let $r$ be an integer coprime to $|G|$.
If $G_0$ does not contain the centralizer in $G$ of $g$,
then $g^r=g^y$ for some $y\in G_0$.
\end{lemma}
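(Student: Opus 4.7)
The plan is to exploit the hypothesis that every irreducible complex character of $G$ is rational-valued in order to produce a conjugator in $G$, and then to correct it by an element of $C_G(g) \setminus G_0$ if necessary.

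First I would observe the standard fact, stated in the paragraph preceding the lemma, that rational-valuedness of all irreducible characters of $G$ is equivalent to the conjugacy of $g$ and $g^r$ in $G$ for every $g \in G$ and every $r$ coprime to $|G|$. Applying this to our given $g$ and $r$, I obtain some $x \in G$ with $g^r = g^x$. If this $x$ already lies in $G_0$, set $y = x$ and we are done.

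Otherwise $x \in G \setminus G_0$. Here I would use the hypothesis that $C_G(g) \not\subseteq G_0$: pick any $c \in C_G(g) \setminus G_0$. Then $cx \in G_0$, because $G_0$ has index $2$ in $G$ and both $c$ and $x$ lie in the nontrivial coset. Finally, since $c$ centralizes $g$,
\[
g^{cx} = (g^c)^x = g^x = g^r,
\]
so $y = cx$ does the job.

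The whole argument is essentially a coset-counting observation: the set of conjugators sending $g$ to $g^r$ is a single coset of $C_G(g)$ in $G$, and such a coset meets $G_0$ precisely when it is not entirely contained in $G \setminus G_0$, which is guaranteed as soon as $C_G(g)$ is not contained in $G_0$. There is no real obstacle; the only subtlety is invoking the characterization of rationality of characters in terms of conjugacy of power maps to extract $x$ at the start.
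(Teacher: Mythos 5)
Your argument is correct and is essentially the paper's own proof: both extract a conjugator $x$ with $g^r=g^x$ from the rationality hypothesis and then, if needed, multiply by an element of $C_G(g)\setminus G_0$ to land the conjugator in $G_0$. No further comment is needed.
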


\begin{proof}
Since all of the irreducible complex characters are rational-valued,
there exists $y$ in $G$ such that
$g^r=g^y$. By hypothesis, there exists $u\in G\setminus G_0$
that centralizes $g$. Then both $y$ and $uy$
conjugate $g$ to $g^r$, and precisely one of these lies in $G_0$.
\end{proof}

For $X$ a group, let $X\wr (\ZZ/2\ZZ)$
denote the group generated by $X\times X$ and its automorphism
$\tau(x,y)=(y,x)$.

\begin{lemma} \label{wreathlemma}
Let $X$ be a finite group with all representations
defined over~$\QQ$.
\begin{enumerate}
\item The group $Y=X\wr(\ZZ/2\ZZ)$ is split.
\item
Let $X_0<X$ be a subgroup
of index~$2$, and suppose that $X_0$ is split.
Let $W_0\le X\times X$ be the subgroup consisting of those
pairs $(x,y)$ such that $xy\in X_0$. Then $W= \langle W_0, \tau\rangle$
is split.
\end{enumerate}
\end{lemma}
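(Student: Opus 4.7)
Part~(1) follows from Theorem~\ref{wreath2} with $H=X$ and $n=2$.

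For part~(2), the plan uses two inclusions: $W$ sits inside $Y := X\wr(\ZZ/2\ZZ)$ as the kernel of the homomorphism $(a,b)\tau^{k}\mapsto ab\bmod X_0$ from $Y$ to $X/X_0\cong\ZZ/2\ZZ$, so $W$ has index~$2$ in $Y$; by part~(1) applied to $X$, $Y$ is split. Second, $H := X_0\wr(\ZZ/2\ZZ)$ is a normal subgroup of $W$ of index~$2$; by part~(1) applied to $X_0$, $H$ is split. For each $\chi\in\Irr(W)$ I would case-split by Clifford theory applied to $H\lhd W$.

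If $\chi|_H$ decomposes as a sum of two distinct irreducibles of $H$, then $\chi=\rho_H^W$ is induced from an irreducible character $\rho$ of $H$. Since $\rho$ is the trace of a $\QQ H$-module (as $H$ is split), $\chi$ is the trace of the induced $\QQ W$-module, hence split. In the remaining case, $\chi|_H=\rho$ is irreducible and $\chi$ is one of two extensions of the $W$-invariant rational-valued character $\rho$ to $W$, the two extensions differing by the sign character $\epsilon$ of $W/H$. Here I would apply Lemma~\ref{index 2} with $N=W$ and $G=Y$: since $Y$ is split, any irreducible constituent of $\chi_W^Y$ is defined over $\QQ$, so $\chi$ is defined over the field $\QQ(\chi)$ of its character values. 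Galois conjugation sends $\chi$ to another extension of the rational $\rho$, so $\chi^\sigma\in\{\chi,\chi\epsilon\}$; hence $\QQ(\chi)$ is either $\QQ$ (and $\chi$ is split) or a quadratic extension.

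The hard part is to rule out the quadratic-extension possibility in the extension case. If $\chi$ is $Y$-invariant, the matter is settled directly: $\chi$ extends to a $\QQ Y$-irreducible character, which restricts to a $\QQ W$-module affording $\chi$. Otherwise, $\chi_W^Y=\psi$ is an irreducible character of $Y$ with $\psi|_W=\chi+\chi^y$ for some $y\in Y\setminus W$; rationality of $\psi$ shows that the Galois orbit of $\chi$ lies in $\{\chi,\chi^y\}$, and combined with $\chi^\sigma\in\{\chi,\chi\epsilon\}$ this forces $\chi^\sigma\in\{\chi,\chi\epsilon\}\cap\{\chi,\chi^y\}$. When $\chi^y\ne\chi\epsilon$, the intersection is $\{\chi\}$ and $\chi$ is rational. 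The plan is to verify $\chi^y\ne\chi\epsilon$ by exploiting that $H$ is not $Y$-normal: taking $y=(g,1)\in Y\setminus W$ with $g\in X\setminus X_0$, conjugation by $y$ carries $H$ to the distinct index-$2$ subgroup $yHy^{-1}=(X_0\times X_0)\cup(X_0g\times X_0g)\tau$ of $W$, so $\chi^y|_H$ is determined by values of $\chi$ on $yHy^{-1}$, which generically differs from $\chi\epsilon|_H=\chi|_H=\rho$ (the latter determined by values on $H$). Once $\chi$ is known rational, Lemma~\ref{index 2} yields that $\chi$ is defined over $\QQ$, hence split.
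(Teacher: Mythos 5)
Part (1) coincides with the paper's proof. For part (2), however, your Clifford-theoretic analysis relative to $H=X_0\wr(\ZZ/2\ZZ)\lhd W$ has a genuine gap at exactly the decisive point. The induced case ($\chi|_H$ reducible) and the $Y$-invariant case are fine, as is the final appeal to Lemma~\ref{index 2} once rationality of $\chi$ is known. But in the remaining case ($\chi|_H=\rho$ irreducible, $\chi$ not $Y$-invariant) everything hinges on the claim $\chi^{y}\neq\chi\epsilon$, and for this you offer only that the two characters ``generically'' differ on $H$. That is not a proof, and it is precisely the configuration you cannot afford to leave open: the two constraints you establish, $\chi^{\sigma}\in\{\chi,\chi\epsilon\}$ (from rationality of $\rho$) and $\chi^{\sigma}\in\{\chi,\chi^{y}\}$ (from rationality of $\chi_W^Y$), are simultaneously compatible with $\QQ(\chi)$ being a quadratic field exactly when $\chi^{y}=\chi\epsilon$, so nothing in your argument excludes the one possibility that would defeat it. Since $y$ does not normalize $H$, comparing $\chi^{y}|_H$ with $\rho$ gives no visible contradiction, and you supply no mechanism to rule this case out (nor an alternative argument that $\chi$ is rational when it occurs).

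The paper closes this hole by a different and more elementary route, which you could adopt to repair the argument: it first proves that \emph{every} irreducible character of $W$ is rational-valued, with no case division. Indeed $W/(X_0\times X_0)$ is a Klein four group, so $W$ is the union of the three index-$2$ subgroups $W_0$, $H=X_0\wr(\ZZ/2\ZZ)$, and $yHy^{-1}$. The latter two are split by part (1), hence have rational characters; for $W_0$, one checks that any $w=(a,b)\in W_0\setminus(X_0\times X_0)$ has $a,b\notin X_0$, so $(a,1)$ centralizes $w$ and lies outside $W_0$, and Lemma~\ref{conjugacy} (applied to $W_0\le X\times X$, using that $X\times X$ has rational characters) gives $w\sim_{W_0} w^{r}$; elements of $X_0\times X_0$ are handled by splitness of $X_0$. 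Thus every element of $W$ is conjugate to its coprime powers inside one of the three subgroups, so all characters of $W$ are rational, and then Lemma~\ref{index 2} applied to $W\le Y$ (with $Y$ split by part (1)) yields that $W$ is split. If you replace your ``generic'' step by this covering-and-centralizer argument, your proof becomes complete; as written, it is not.
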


\begin{proof}
Statement (1) is the assertion of Theorem~\ref{wreath2} for $K=\QQ$ and
$n=2$, and we turn to (2).

We claim that all irreducible complex characters of $W_0$ are
rational-valued.
Indeed, this is the case for $X_0\times X_0$,
and if $w \in W_0\setminus(X_0\times X_0),$
then the centralizer in ${X\times X}$ of $w$ 
is not contained in $W_0$, and applying
Lemma~\ref{conjugacy} to $W_0 \le X\times X$ proves the claim.

Note that $W/(X_0 \times X_0)$ is the Klein four group, hence
$W$ is the union of three proper subgroups of index~$2$, namely,
$W_0, X_0 \wr(\ZZ/2\ZZ)$, and a subgroup conjugate to
$X_0 \wr(\ZZ/2\ZZ)$ in $Y$.
Since each of these three subgroups has all irreducible
complex characters rational-valued, the same is true for $W$, by the
well-known criterion:
all irreducible complex characters of
the finite group $G$ are rational-valued if and only if
for all $g\in G$, and all integers $r$ coprime to $|G|$, $g\sim g^r$.

Part (2) now follows from Lemma~\ref{index 2} applied to
the index-2 inclusion \mbox{$W\le Y$}.
\end{proof}

\begin{lemma}  \label{sym}
Let $E$ be an elementary abelian $2$-subgroup
of $\Sn$.  Let $C$ be the centralizer of $E$ in $\Sn$, and
let $S$ be a Sylow $2$-subgroup of~$C$.
\begin{enumerate}
\item $C$ is split.
\item If $E$ has no fixed points, then $C \cap \An$ is split.
\item $S$ is split.
\item $S \cap \An$ is split.
\end{enumerate}
\end{lemma}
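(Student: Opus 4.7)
The plan is to exploit the decomposition from Section \ref{symmetric}: $C \cong \prod_{k\ge0} C_k$ with $C_k \cong (\ZZ/2\ZZ)^k \wr \Sym_{a_k}$, where $a_k$ counts the $E$-orbits of size $2^k$. For (1), each base $(\ZZ/2\ZZ)^k$ is abelian of exponent two (hence split), so Theorem \ref{wreath2} makes each $C_k$ split and Lemma \ref{products} yields $C$ split. For (3), the Sylow 2-subgroup $S_k$ of $C_k$ factors further as $\prod_j ((\ZZ/2\ZZ)^k \wr P_{2^{r_{k,j}}})$ along the binary expansion $a_k = \sum_j 2^{r_{k,j}}$, where $P_{2^{r_{k,j}}}$ is a Sylow 2-subgroup of $\Sym_{2^{r_{k,j}}}$; each factor is then split by Corollary \ref{sylowwreath}.

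For (2), the idea is to compute the restriction to each $C_k$ of the sign character $\epsilon$ of $\Sn$. A nonidentity base element of $(\ZZ/2\ZZ)^k$ acts as $2^{k-1}$ disjoint transpositions on its $2^k$-orbit, and a transposition in $\Sym_{a_k}$ acts as $2^k$ disjoint transpositions; hence $\epsilon$ is trivial on $C_k$ for $k \ge 2$ and coincides with the sum-of-base-coordinates homomorphism on $C_1 = (\ZZ/2\ZZ) \wr \Sym_{a_1}$. The kernel on $C_1$ is $\{v : \sum v_i = 0\} \rtimes \Sym_{a_1}$, i.e.\ the Weyl group $W(D_{a_1})$ (trivial for $a_1 \le 1$). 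Since $a_0 = 0$ by hypothesis, $C \cap \An \cong W(D_{a_1}) \times \prod_{k\ge 2} C_k$; the first factor is split by Benard--Springer, the others by (1), and Lemma \ref{products} concludes.

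Part (4) is the hardest. The key tool I would establish first is the following \emph{product lemma}: if $G_1, G_2$ are split groups each carrying a split index-2 subgroup $G_i^0 = \ker \epsilon_i$, then $N := \ker(\epsilon_1 \cdot \epsilon_2) \leq G_1 \times G_2$ is split. Since $G_1 \times G_2$ is split (hence has only rational characters), Lemma \ref{index 2} reduces the claim to showing every irreducible character of $N$ is rational-valued, equivalently that $g \sim_N g^r$ for every $g \in N$ and odd $r$. Lemma \ref{conjugacy} handles every $g$ with $C_{G_1 \times G_2}(g) \not\subseteq N$. In the remaining case, inserting $1$ in either coordinate forces $C_{G_i}(g_i) \subseteq G_i^0$, whence $g \in G_1^0 \times G_2^0 \subseteq N$; splitness of each $G_i^0$ then supplies $y_i \in G_i^0$ with $y_i g_i y_i^{-1} = g_i^r$, and $(y_1, y_2)$ conjugates $g$ to $g^r$ inside $N$.

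With the product lemma, the decomposition $\epsilon|_S = \epsilon_0 \cdot \epsilon_1$ (usual sign on $S_0 = P_{a_0}$, sum-of-base-coordinates on $S_1$, trivial on $S_k$ for $k \ge 2$) reduces (4) to two splitness auxiliaries: (a) the Sylow 2-subgroup of $\An_n$ is split, and (b) the Sylow 2-subgroup of $W(D_n)$ is split. Each is established by induction on $r$ for $n = 2^r$ using Lemma \ref{wreathlemma}(2). For (a), write $P_{2^r} = P_{2^{r-1}} \wr \ZZ/2\ZZ$ and note that the top swap has sign $(-1)^{2^{r-1}} = +1$ for $r \ge 2$; then $P_{2^r} \cap \An_{2^r}$ is exactly the $W = \langle W_0, \tau\rangle$ of the lemma with $X = P_{2^{r-1}}$ (split) and $X_0 = P_{2^{r-1}} \cap \An_{2^{r-1}}$ (split by induction). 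For (b), use the natural isomorphism $(\ZZ/2\ZZ) \wr P_{2^r} \cong ((\ZZ/2\ZZ) \wr P_{2^{r-1}}) \wr \ZZ/2\ZZ$ to rewrite the sum-of-base-coordinates kernel as the same $W$, now with $X = (\ZZ/2\ZZ) \wr P_{2^{r-1}}$ split by Corollary \ref{sylowwreath}. Passage from powers of two to general $n$ is obtained by iterating the product lemma along a binary expansion. The main obstacle is the verification of the product lemma in the case $C_{G_1 \times G_2}(g) \subseteq N$, resolved by forcing $g \in G_1^0 \times G_2^0$ and invoking character-rationality of the $G_i^0$; the remainder is structural bookkeeping.
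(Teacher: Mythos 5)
Your argument is built on the claim that $C\cong\prod_k C_k$ with $C_k\cong(\ZZ/2\ZZ)^k\wr\Sym_{a_k}$, where $a_k$ is the number of $E$-orbits of size $2^k$. That claim is false: an element centralizing $E$ can only permute $E$-orbits having the \emph{same kernel} (point stabilizer) in $E$, not merely the same size. Concretely, for $E=\langle(1\,2),(3\,4)\rangle\le\Sym_4$ one has $a_1=2$, so your decomposition predicts $C\cong(\ZZ/2\ZZ)\wr\Sym_2$ of order $8$, but $C_{\Sym_4}(E)=E$ has order $4$; correspondingly your assertion in (2) that $C\cap\Alt_n\cong W(D_{a_1})\times\prod_{k\ge2}C_k$ fails here, since $C\cap\Alt_4=\langle(1\,2)(3\,4)\rangle$ has order $2$ while $W(D_2)$ has order $4$. (The phrasing in Section~\ref{symmetric} is only used there for a counting bound and cannot be read as a group identity for a fixed $E$.) The correct decomposition is $C\cong\prod_K(E/K)\wr\Sym_{b_K}$, the product over the distinct orbit kernels $K\le E$, with $b_K$ the number of orbits with kernel $K$; in particular several ``orbit size $2$'' factors may occur, the restriction of the sign character is then a product of several sum-of-coordinates characters (plus the ordinary sign on the fixed points), and $C\cap\Alt_n$ (resp.\ $S\cap\Alt_n$) is a diagonal index-$2$ subgroup of the product, not the direct product you wrote. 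Parts (1) and (3) survive this correction verbatim, and (2), (4) are repairable by applying your product lemma across \emph{all} sign-carrying factors rather than just one; but as written the reduction in (2) and (4) rests on a false structural statement, so there is a genuine gap.

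The salvageable core of your approach is genuinely different from the paper's and worth noting. Your product lemma (kernel of $\epsilon_1\cdot\epsilon_2$ in $G_1\times G_2$ is split when the $G_i$ and the $\ker\epsilon_i$ are split) is correct: the case $C_{G_1\times G_2}(g)\not\subseteq N$ is Lemma~\ref{conjugacy}, the remaining case forces $g\in G_1^0\times G_2^0$ as you say, and the descent to $N$ uses Lemma~\ref{index 2} exactly as the paper does; your treatment of the auxiliaries (Sylow $2$-subgroups of $\Alt_{2^r}$ and of $W(D_{2^r})$) via Lemma~\ref{wreathlemma}(2) and the evenness of the block swap for $r\ge2$ is also sound. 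Two further remarks: your rationality criterion should be stated for $r$ coprime to the group order (harmless here since everything relevant is a $2$-group), and your proof of (2) imports the splitness of $W(D_{a_1})$ from the classical Benard--Springer result, which is legitimate but makes the classical case non-self-contained, whereas the paper proves Lemma~\ref{sym} without it and then \emph{deduces} type $D_n$ in Theorem~\ref{classicalthm}. The paper instead argues by induction on $n$: if some $E$-orbit is non-regular (or $S$ is intransitive), the centralizer preserves a proper splitting of the domain and one factors and inducts via Lemma~\ref{products}; in the regular/transitive case $C=E\wr\Sym_d$ and $S=E\wr S_0$, and Theorem~\ref{wreath2}, Corollary~\ref{sylowwreath}, Lemma~\ref{wreathlemma} and Lemma~\ref{index 2} finish, which avoids the kernel bookkeeping that tripped you up.
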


\begin{proof}
Let $\Omega$ denote the set of cardinality $n$ on which $\Sn$ acts.

We prove (1) and (3) together.
If $n=1$, then $C=S=1$ is split.
We proceed by induction on $n$.

Assume that $E$ does not act regularly on some $E$-orbit $\Omega$.
Then there is a partition $\Omega = \Omega_1 \coprod \Omega_2$, 
such that the kernel of the action of
$E$ on $\Omega_1$ has no fixed points on $\Omega_2$.
Thus each $\Omega_i$ is invariant under the centralizer $C$ of $E$.
Let $C_i$ be the centralizer in $\Sym(\Omega_i)$
of the image of $E$.
Then $C = C_1 \times C_2$ and $S  = S_1\times S_2$,
where $S_i$ is the Sylow $2$-subgroup of the centralizer of an elementary
abelian $2$-group in a smaller symmetric group.
Now $C$ and $S$ are split by the 
inductive hypothesis and Lemma~\ref{products}.

Suppose $E$ acts regularly on all $E$-orbits.  
Then $C = E \wr \Sym_d$
where  $n=d|E|$ and so $C$ is split by Theorem \ref{wreath2}.

Similarly, if $S$ is not transitive on $\Omega$, then 
we can write  $\Omega = \Omega_1 \coprod \Omega_2$, where each
$\Omega_i$ is $S$-invariant. As above, (3) follows by
induction.

Suppose $S$ is transitive on $\Omega$.
Then  $S = E \wr S_0$ where $S_0$ is a Sylow $2$-subgroup
of a symmetric group of degree $m$, where
$m\abs{E} = \abs{\Omega}$. Necessarily $m$ is
a power of $2$.
Now $S$ is split by Corollary~\ref{sylowwreath}
applied to $E=G$.
This finishes the proof of (1) and (3) and we turn now to~(2).

Assume $E$ has no fixed points, and set 
$D=C\cap \An$.
We claim that if the integer $r$ is coprime to the order of $g \in
C$, then there exists $d\in D$ such that $g^d = g^r$.
Clearly, the claim, if true, would imply (2) by Lemma~\ref{conjugacy}.

As above, we are done by induction if some orbit of $E$ is not
regular.  So we assume that is the case.
Note that if $\abs{E}\ge 4$, then $C\le \An$ and the claim follows
from (1). 

Assume $E$ of order $2$ acts semiregularly. The nonidentity element
of $E$ then is a fixed-point-free involution.
Any $c\in C$ can be written 
$$
c=\sigma(e_1,\dots e_m)
$$
where $2m=n, e_i\in E$, and $\sigma\in\Sym_m$.
If $\sigma$ is not a $m$-cycle, the claim follows
by induction.

If $e_1e_2\cdots e_m \ne 1$, then $\sigma \notin D$
and the claim follows from Lemma~\ref{wreathlemma}(1).

If $e_1e_2\cdots e_m  = 1$, then $c$ is conjugate to $\sigma$.
The claim now follows as $\sigma$ and $\sigma^r$
are conjugate in $\Sym_m\le \An$.
This finishes the proof of the claim and of~(2).

We prove (4). Let $T = S \cap \An$.
Since $S$ is split and $|S:T|=2$, it suffices
to show that all irreducible complex characters of $T$
are rational-valued. We will prove the equivalent property:
if $s \in S$ and $r$ is odd, then $s^r=s^g$ for some $g \in T$.

As above, if $S$ is not transitive, the result follows by induction.
If $S$ is transitive, then all orbits of $E$ are regular.
Lemma~\ref{wreathlemma}(2) shows that $T$ is split.
\end{proof}

Our main result for the classical
Weyl groups now follows.

\begin{theorem}  \label{classicalthm}
Let $E$ be an elementary abelian subgroup
of a classical Weyl group~$W$. Let $C$ be the centralizer of $E$
in $W$, and $T$ a Sylow $2$-subgroup of $C$. Then both $C$ and $T$
are split.
\end{theorem}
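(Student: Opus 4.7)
By Lemma~\ref{products}, it suffices to treat each irreducible classical Weyl group separately. Type $A_{n-1}$ is $\Sn$, which is handled directly by Lemma~\ref{sym}(1) and (3). For types $B_n$ and $D_n$, the plan is to reduce to Lemma~\ref{sym} via the standard realization of $B_n$ as the signed-permutation group on $\Omega = \{\pm 1, \ldots, \pm n\}$.

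For type $B_n$, identify $B_n$ with the centralizer in $\Sym_{2n}$ of the fixed-point-free involution $\tau = (1,-1)(2,-2)\cdots(n,-n)$. Given an elementary abelian $2$-subgroup $E \le B_n$, the group $\tilde E = \langle E, \tau \rangle \le \Sym_{2n}$ is still elementary abelian, since every element of $E$ commutes with $\tau$. Because $B_n = C_{\Sym_{2n}}(\tau)$,
\[
C_{B_n}(E) = C_{\Sym_{2n}}(E) \cap B_n = C_{\Sym_{2n}}(\tilde E).
\]
Hence $C_{B_n}(E)$ is split by Lemma~\ref{sym}(1), and its Sylow $2$-subgroup is split by Lemma~\ref{sym}(3).

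For type $D_n$, I will use the identification $D_n = B_n \cap \Alt_{2n}$, which holds because each sign-change generator acts on $\Omega$ as a transposition (odd), while each $k$-cycle of $\Sym_n \le B_n$ acts as a product of two disjoint $k$-cycles on $\Omega$ (even); hence $(v,\pi) \in (\ZZ/2\ZZ)^n \rtimes \Sym_n$ lies in $\Alt_{2n}$ exactly when $v$ has even weight. Given an elementary abelian $2$-subgroup $E \le D_n$, form $\tilde E = \langle E, \tau \rangle$ as before; since $\tau$ is fixed-point-free, so is $\tilde E$. Then
\[
C_{D_n}(E) = C_{B_n}(E) \cap \Alt_{2n} = C_{\Sym_{2n}}(\tilde E) \cap \Alt_{2n},
\]
which is split by Lemma~\ref{sym}(2). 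For the Sylow $2$-subgroup, pick a Sylow $2$-subgroup $S$ of $C_{\Sym_{2n}}(\tilde E)$ and set $T = S \cap \Alt_{2n}$; a short index count shows $T$ is a Sylow $2$-subgroup of $C_{D_n}(E)$, and $T$ is split either by Lemma~\ref{sym}(3) (if $S \le \Alt_{2n}$ and $T = S$) or by Lemma~\ref{sym}(4) (if $[S:T] = 2$).

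The entire argument rests on the two identifications $B_n = C_{\Sym_{2n}}(\tau)$ and $D_n = B_n \cap \Alt_{2n}$, together with the observation that including $\tau$ in $\tilde E$ automatically supplies the no-fixed-points hypothesis needed for Lemma~\ref{sym}(2). No substantive obstacle remains beyond this bookkeeping; the representation-theoretic content has already been discharged in Lemma~\ref{sym}.
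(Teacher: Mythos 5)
Your proof is correct and takes essentially the same route as the paper: type $A$ via Lemma~\ref{sym}(1),(3), type $B_n$ via the identification $B_n=C_{\Sym_{2n}}(\tau)$ so that $C_{B_n}(E)=C_{\Sym_{2n}}(\langle E,\tau\rangle)$, and type $D_n$ via $D_n=B_n\cap\Alt_{2n}$ together with Lemma~\ref{sym}(2),(4). The only (harmless) deviation is that you treat $D_n$ uniformly in $n$, whereas the paper disposes of odd $n$ separately through Lemma~\ref{products} using $B_n\cong D_n\times(\ZZ/2\ZZ)$; your uniform argument is valid since the centralizer identity and the fixed-point-freeness of $\langle E,\tau\rangle$ do not require $\tau\in D_n$.
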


\begin{proof} For $W$ of type $A_n$, this is equivalent to assertions
(1) and (3) of Lemma~\ref{sym}.
The case of type $B_n$ is a
special case of the type $A_{2n}$ result.
(Indeed, since $W$ of type $B_n$ is the centralizer of a fixed
point free involution $z$ in $\Sym_{2n}$, it follows that
the centralizer in $W$ of $E$ equals the centralizer in $\Sym_{2n}$
of the elementary abelian $2$-subgroup $\langle z, E\rangle$.)
For $W$ of type $D_n, n$ odd, this follows from
Lemma~\ref{products},
since $W \times (\ZZ/2\ZZ)$ is a Weyl group of type $B_n$.
For $W$ of type $D_n,n$ even,
this follows from Lemma~\ref{sym}(2) and (4) since
$W = B_n \cap \mathrm{Alt}_{2n}$.
\end{proof}

\section{Exceptional Weyl groups} \label{exceptional}

For the exceptional Weyl groups, we prove somewhat more
than Theorem~\ref{main}.

\begin{theorem} \label{gg3}
Let $W$ be the Weyl group of an exceptional complex simple Lie algebra
($G_2$, $F_4$, $E_6$, $E_7$, or  $E_8$).
Let $C$ be the centralizer of an elementary abelian $2$-subgroup of
$W$. Let $T$ be a Sylow $2$-subgroup of $C$.
\begin{enumerate}
\item All characters of $T$ are rational-valued.
All Frobenius-Schur indicators of $T$ are equal to $+1$.
Consequently, $T$ is split.
\item For every $\chi\in Irr(C)$, there exists
$\psi\in \Irr(T)$ such that $(\chi,\psi_T^C)$ is odd.
\item All irreducible characters of $C$ are rational-valued.
\end{enumerate}
\end{theorem}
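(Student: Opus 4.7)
My plan is to establish all three conditions by a finite direct computation for each exceptional type $W \in \{G_2, F_4, E_6, E_7, E_8\}$, leveraging the downward induction algorithm described in the introduction. First I would enumerate, up to $W$-conjugacy, the centralizers $C$ of elementary abelian $2$-subgroups of $W$: start with $W$ itself, pick involutions in each current centralizer, form their centralizers, and iterate, discarding conjugate duplicates. Since an elementary abelian $2$-group is generated by its involutions, and since two conjugate elementary abelian $2$-subgroups have conjugate centralizers, this produces (after finitely many steps) a complete list of representative $C$'s. For each such $C$, I would compute a Sylow $2$-subgroup $T$.

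With the list in hand, I would verify the three conditions case-by-case from the character tables of $C$ and $T$. For (1), I would read off the entries of the character table of $T$ to check rationality, and compute Frobenius--Schur indicators $\nu_2(\psi) = \abs{T}^{-1}\sum_{t \in T} \psi(t^2)$ for each $\psi \in \Irr(T)$; confirming $\nu_2(\psi) = +1$ for all $\psi$ together with the rationality check, Lemma~\ref{2-groups} then gives that $T$ is split. For (3), I would check the character table of $C$ directly for rational values. For (2), for each $\chi \in \Irr(C)$ I would tabulate the inner products $(\chi, \psi_T^C)$ as $\psi$ runs over $\Irr(T)$, and verify at least one is odd. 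Once (1), (2), (3) are verified, Corollary~\ref{useful} applied with $H = T$, together with the fact that $T$ is split by (1), yields that every $\chi \in \Irr(C)$ is afforded by a $\QQ C$-module, so $C$ is split as well.

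The main obstacle is computational scale, especially for $W = W(E_8)$, whose order exceeds $10^8$; the centralizers $C$ and in particular their Sylow $2$-subgroups can still be large, and naive character-table computations may be costly. The saving grace is precisely the remark made in the introduction: while the number of conjugacy classes of elementary abelian $2$-subgroups grows very rapidly (compare Lemma~\ref{counting} in the symmetric-group setting), the number of conjugacy classes of their centralizers is modest in the exceptional cases, so the downward-induction list is tractable. Any residually difficult case (large $T$ inside $W(E_8)$) can, if needed, be reduced by factoring off direct summands using Lemma~\ref{products} whenever the $E$-action decomposes, or by exploiting wreath-product structure via Theorem~\ref{wreath2} and Corollary~\ref{sylowwreath}; but I expect the bulk of the verification to be handled as a uniform Magma computation over the enumerated list, with the genuinely new content of the proof being the assertion that this finite check succeeds in every case.
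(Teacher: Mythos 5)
Your proposal is correct and coincides with the paper's own proof: Theorem~\ref{gg3} is established there precisely by the downward-induction enumeration of centralizers followed by a direct Magma verification of conditions (1)--(3), with splitness of $T$ and $C$ then deduced via Lemma~\ref{2-groups} and Corollary~\ref{useful} (Brauer--Speiser), exactly as you describe. The only difference is cosmetic: the paper's code also includes a tensor-product fallback in case induction from $T$ alone were to miss some character, but this does not change the argument.
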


The proof of Theorem~\ref{gg3} is a direct calculation in
Magma~\cite{magma}
(see~\cite[Appendix A]{gg}).
Now Theorem~\ref{main} follows from Theorem~\ref{gg3},
the Brauer-Speiser Theorem and Lemma~\ref{Schur index}.

We have already seen that part~(1) of Theorem~\ref{gg3}
holds for the classical Weyl groups.
We have verified part (2) for $\Sn, 1\le n\le 25$
by a direct calculation in Magma
(see~\cite[Appendix B]{gg}), and we we suspect that it holds for all
classical Weyl groups.

\section{Positive characteristic} \label{positive}
In order to apply our result to representations in positive
characteristic, we appeal to
a result of Thompson~\cite[p.\ 327]{jgt}.
Let $G$ be a finite group, $F$ an algebraically closed
field of positive characteristic~$p$, and $V$ an irreducible $FG$-module.
Thompson's result states that if $p$ is odd and $V$ is self-dual,
then there is an irreducible self-dual characteristic zero
representation $W$ of $G$, such that 
$V$ appears with odd multiplicity as a composition factor 
of the reduction mod $p$ of $W$.
Moreover, any such self-dual $W$ has the
same Frobenius-Schur indicator as that of $V$.

\begin{corollary}[Corollary to Thompson's result] \label{tc}
Let $G$ be a finite group all of whose irreducible complex
representations can be defined over~$\QQ$.
Let $F$ be an algebraically closed field of positive characteristic~$p$.
Then every irreducible $FG$-module $V$ is self-dual and can be
defined over the prime field.
Furthermore, if $p$ is odd, then
the Frobenius-Schur indicator of $V$ is~$+1$.
\end{corollary}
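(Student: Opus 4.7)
The plan is to handle the three claims separately, drawing on the conjugacy-class consequences of the rationality hypothesis, on Wedderburn's theorem, and on Thompson's result. First I would establish self-duality of $V$. The hypothesis on $G$ implies every ordinary irreducible character is $\QQ$-rational, so $g\sim g^r$ for every $r$ coprime to $|G|$; in particular $g\sim g^{-1}$. Let $\varphi_V$ be the Brauer character of $V$. On a $p$-regular element $g$, $\varphi_{V^*}(g)$ is the sum of the inverses of the lifted eigenvalues of $g$ on $V$, so $\varphi_{V^*}(g)=\varphi_V(g^{-1})=\varphi_V(g)$. Since distinct irreducible $FG$-modules have distinct Brauer characters, $V\cong V^*$.

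Next I would prove $V$ is defined over $\mathbb{F}_p$. Given a $p$-regular element $g$ of order $n$, the Chinese remainder theorem produces an integer $r$ coprime to $|G|$ with $r\equiv p\pmod n$, so $g^p=g^r\sim g$. Frobenius therefore fixes each $p$-regular conjugacy class. By Brauer's theorem, the number of simple $\mathbb{F}_p G$-modules equals the number of Frobenius-orbits on $p$-regular classes, which coincides with the number of $p$-regular classes itself, i.e.\ with the number of absolutely simple $\overline{\mathbb{F}}_p G$-modules. Since the Brauer group of a finite field is trivial (Wedderburn), a simple $\mathbb{F}_p G$-module tensored with $\overline{\mathbb{F}}_p$ decomposes as a multiplicity-free direct sum of the Galois conjugates of an absolutely simple module. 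The equality of counts then forces each Galois orbit to be a singleton, so every absolutely simple $\overline{\mathbb{F}}_p G$-module descends to $\mathbb{F}_p$. Extending scalars from $\overline{\mathbb{F}}_p$ to the algebraically closed field $F$ then realizes every irreducible $FG$-module over the prime field.

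For the Frobenius-Schur indicator when $p$ is odd, I would invoke Thompson's result together with the self-duality just established: there exists an irreducible self-dual complex representation $W$ of $G$ in which $V$ occurs with odd multiplicity as a composition factor of the reduction mod $p$ of $W$, and whose Frobenius-Schur indicator equals that of $V$. By hypothesis $W$ is defined over $\QQ$, so its indicator is $+1$, and therefore so is that of $V$.

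The main obstacle is the descent step: combining the bijection between simple $\mathbb{F}_p G$-modules and Frobenius-orbits of $p$-regular classes with the triviality of the Brauer group of a finite field to eliminate multiplicities when passing to $\overline{\mathbb{F}}_p$. The other two parts are essentially immediate from the hypothesis and from Thompson's theorem.
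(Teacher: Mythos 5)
Your argument is correct, and its first and third parts coincide with the paper's proof: rational (hence real) ordinary characters give $g\sim g^{-1}$, so the Brauer character of $V^*$ equals that of $V$ and $V$ is self-dual; and for odd $p$ the indicator claim is exactly the combination of Thompson's theorem with the hypothesis that the self-dual complex constituent $W$ is realizable over $\QQ$, hence has indicator $+1$. Where you genuinely diverge is the descent to the prime field. The paper argues directly: since $g^r\sim g$ for all $r$ coprime to $\abs{G}$, the Brauer character (trace) values of $V$ lie in the prime field, and Wedderburn's theorem (no nontrivial finite division rings, so no Schur-index obstruction over finite fields) then lets one realize $V$ over the field generated by its traces, namely $\mathbb{F}_p$. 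You instead run a counting argument: Frobenius fixes every $p$-regular class (your CRT choice of $r\equiv p\pmod n$ coprime to $\abs{G}$ is fine), so the number of simple $\mathbb{F}_pG$-modules --- which equals the number of $\langle g\mapsto g^p\rangle$-orbits on $p$-regular classes (this general form is due to Berman rather than Brauer, whose theorem is the splitting-field case) --- equals the number of absolutely simple modules; combined with the multiplicity-free Galois decomposition of $S\otimes\overline{\mathbb{F}}_p$ coming from the triviality of Brauer groups of finite fields, every Galois orbit is a singleton and each absolutely simple module descends. Both routes ultimately rest on Wedderburn; the paper's is shorter because it uses the field of Brauer-character values as the field of definition, while yours avoids invoking that fact explicitly at the cost of the orbit-counting theorem. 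Either way the conclusion, including the final extension of scalars from $\overline{\mathbb{F}}_p$ to $F$, is sound.
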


\begin{proof}
Since $G$ is split, all characters of $G$ are real-valued,
and so every element of $G$ is conjugate to its inverse.
It follows that $V$ is self-dual.

Since $g^r\sim g$ for $r$ coprime to $|G|$, it follows that
the trace of $g$ on $V$ has  values in the prime field.
It now follows from Wedderburn's theorem that $V$
can be defined over the prime field.

Suppose $p$ is odd, and let $W$ be a characteristic zero irreducible module 
such that $V$ occurs in the reduction of $W$ modulo $p$ with odd multiplicity
(such a $W$ exists by Thompson's result).
Since $G$ is split, $W$ has Frobenius-Schur indicator
$+1$, whence $V$ has Frobenius-Schur indicator $+1$, as was to be shown.
\end{proof}

As a consequence of Corollary~\ref{tc} and Theorem~\ref{main}, we have:

\begin{corollary} \label{usingthompson} Let $W$ be a Weyl group
and $E$ an elementary abelian $2$-subgroup of $W$.
Let $C$ be the centralizer of $E$ in $W$, and let $T$
be a Sylow $2$-subgroup of $C$. Let $R=C$ or $T$.
If $F$ is an algebraically closed
field of odd characteristic, then
every irreducible $FR$-module is self-dual,
can be realized over the prime field, and has Frobenius-Schur indicator~$+1$.
\end{corollary}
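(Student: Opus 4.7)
The plan is essentially to package two already-established results: Theorem~\ref{main} and Corollary~\ref{tc}. There is nothing new to prove; the corollary simply asserts that the hypothesis of Corollary~\ref{tc} is satisfied by $R=C$ and $R=T$.

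First I would invoke Theorem~\ref{main}, which says that both $C$ and $T$ are split, i.e., every irreducible complex representation of $R$ can be realized over $\QQ$. This means $R$ satisfies the standing hypothesis of Corollary~\ref{tc}.

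Next I would apply Corollary~\ref{tc} to the group $R$ and to the algebraically closed field $F$ of odd characteristic $p$. That corollary yields directly the three conclusions: every irreducible $FR$-module $V$ is self-dual, can be defined over the prime field $\mathbb{F}_p$, and (since $p$ is odd) has Frobenius-Schur indicator $+1$. The odd-characteristic hypothesis is needed precisely where Corollary~\ref{tc} uses Thompson's lifting theorem to transfer the Frobenius-Schur indicator from a characteristic-zero companion module $W$ to $V$; self-duality and definability over the prime field do not require $p$ odd, but the indicator statement does.

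The only ``obstacle'' is bookkeeping: one must note that Theorem~\ref{main} supplies the hypothesis of Corollary~\ref{tc} for both choices $R=C$ and $R=T$ uniformly, with no additional argument needed in either case. Consequently the proof reduces to a two-line citation.
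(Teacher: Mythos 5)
Your proposal is correct and matches the paper exactly: the paper states Corollary~\ref{usingthompson} as an immediate consequence of Theorem~\ref{main} (which supplies that $C$ and $T$ are split) combined with Corollary~\ref{tc}, with no further argument. Your remark on where the odd-characteristic hypothesis enters is accurate but not needed beyond the citation.
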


\section{Real reflection groups} \label{reflection}
A transformation $g\in GL_n(\CC)$ is a \mathdef{real reflection}
if $\dim\ker(g-I_n)=n-1$ and $\dim\ker(g+I_n)=1$.
A finite subgroup of $GL_n(\CC)$ that is generated by
real reflections is a \mathdef{real reflection group}.
This class includes the finite Weyl groups.
In the context of this paper, it is natural to ask
which real reflection groups are split.

Since every real reflection group is a direct
product of irreducible real reflection groups, and
in view of Theorem~\ref{direct products}, we will only
consider irreducible reflection groups.

The irreducible real reflection groups that
are not Weyl groups are $H_3,H_4$ and
the dihedral groups $I_n$ of order $2n$ for $n=5$ and $n\ge7$.

Let $G=H_3$ or $H_4$. Then $G$ is not split.
The centralizer of a non-central elementary
abelian $2$-subgroup of $G$ is split. The centralizer of any elementary
abelian $2$-subgroup of $G$  has a Sylow $2$-subgroup that is split.

The group $I_n$ is split if and only if $n\le 4$.
A non-central elementary abelian $2$-subgroup
is self-centralizing, has order~$4$ and is split.
A Sylow $2$-subgroup of $I_n$ is split
if and only if $n$ is not divisible by $8$.

\section{Open questions} \label{open questions}
We proved Theorem~\ref{gg3}(2) for the exceptional Weyl groups only.

\begin{question} \label{cq}
Does the conclusion of Theorem~\ref{gg3}(2)
hold for the classical Weyl groups?
\end{question}

\begin{question}
Is it true that a Sylow $2$-subgroup of a split finite group is split?
\end{question}

Let $G$ denote an arbitrary finite group and
$S$ a Sylow $2$-subgroup of $G$.
As we have observed, a $2$-group is split
if and only if all characters have values in $\QQ$
and all Frobenius-Schur indicators are $+1$.

Thus, the previous question divides naturally into two parts.

\begin{question} If all irreducible characters of $G$
have Frobenius-Schur indicators $+1$, is the same true for~$S$?
\end{question}

\begin{question} If all irreducible characters of $G$ are rational-valued
is the same true for~$S$?
\end{question}

We also ask:
\begin{question} If all irreducible characters of $G$ have
Frobenius-Schur indicator equal to~$+1$, and all irreducible characters of $S$ are
real-valued, then do all irreducible characters of $S$
have Frobenius-Schur indicator equal to~$+1$?
\end{question}

\appendix

\section{Code} \label{code}
\begin{verbatim}
QQ := Rationals();

// Definition. The complex representation of the finite
// group X is *split*  if it can be defined over QQ.
// X is *split* if all of its (irreducible) complex
// representations are split. A necessary condition
// for X to be split is that it has *rational character
// values* , i.e. if all its character values lie in QQ.
//
// Note that the converse holds if X is a 2-group.


// Given the character table ct of a finite group g,
// "blah(ct)" returns TRUE if all character values of
// g are rational and if all the Frobenius-Schur
// indicators are 1, and FALSE otherwise.
/////////////////////////////////////////////////
function blah(ct)
 b1 :=  &and[ct[i][j] in QQ: i,j in [1..#ct]];
 b2 :=  &and[Schur(j,2) eq 1: j in ct];
 return b1 and b2;
end function;

// On input subsets A and B
// of a finite group g, such
// that no two elements of
// A are conjugate in g,
// "dedupe(g,A,B)" outputs a
// maximal subsetC of AUB such
// that no two elements of C
// are conjugate in g.
//////////////////////////////
function dedupe(g,A,B)
 a := #A;
 b := #B;
 C := Setseq(A) cat Setseq(B);
 flag := [C[j+a] notin A : j in [1..b]];
 for i in [1..a+b], j in [1..b] do
    if i lt j+a and flag[j] and IsConjugate(g,C[i],C[j+a]) then
       flag[j] := false;
    end if;
 end for;
 return {C[j+a] : j in [1..b] | flag[j]};
end function;

// On input a finite group X,
// "bs(X)" tries to show that X is split using
// the Brauer-Speiser theorem
// 1. Suppose X is a 2-group. Outputs TRUE if X has
//    rational character values and FALSE otherwise.
// 2. Suppose X is not a 2-group. Let s be a Sylow
//    2-subgroup of X.
//    a. Construct all (classes of)
//       irreducible representations of s.
//    b. Induce these up to X, to get a set I of
//       split reps of X. Try to prove X split
//       by using Brauer-Speiser.
//    c. Outputs (b1,b2,b3):
//       b1 if X has rational character values
//       b2 if s is split
//       b3 if the process described in b. above
//       constructs all reps of X, hence proving that
//       X is split.
//    d. If b3 TRUE, then stop. Else
//    e. Take tensor products.
//    f. Try to prove X split by using Brauer-Speiser:
//       if chi \in Irr(X) and (chi,i) odd for i \in I or i \in
//       I\tensor I then chi is split.
//    g. Output b4 if taking tensor products constructs all
//       reps of X. Stop.
//////////////////////////////////////////////
procedure bs(X)
 start := Cputime();
 s := SylowSubgroup(X,2);
 ctX := CharacterTable(X);
 bool := [blah(ctX)];
 if #X eq #s then
    <#X, Cputime()-start, bool>; return;
 end if;
 cts := CharacterTable(s);
 Append(~bool, blah(cts));
 rat := {j : j in cts | Schur(j,2) eq 1 and
         &and[j[i] in QQ : i in [1..#cts]]};
 ind := [Induction(j,X) : j in rat];

 S := {};
 out := {j : j in ctX};
 for r in ind do
   for j in out do
     if IsOdd(Integers()!InnerProduct(r,j)) then
        Include(~S,j);
        Exclude(~out,j);
     end if;
   end for;
 end for;
 b := #S eq #ctX;
 Append(~bool, b);
 if b then
    <#X, Cputime()-start, bool>; return;
 end if;

 for i,j in S do
   for t in out do
     if IsOdd(Integers()!InnerProduct(t,i*j)) then
        Include(~S,t);
        Exclude(~out,t);
     end if;
   end for;
 end for;

 Append(~bool,  #S eq #ctX);
 <#X, Cputime()-start, bool>;
end procedure;


// On input the finite group W,
// "get_cents_of_el_ab_twos(W)"
// outputs the set of all centralizers
// of elementary abelian 2-subgroups
// of W.
//////////////////////////////////////
function get_cents_of_el_ab_twos(W)
  old := {};
  current := {W};
  repeat
    start := Cputime();
    new := {};
    for g in current do
      if IsAbelian(g) then continue; end if;
      cc := [j[3] : j in ConjugacyClasses(g)];
      c := Center(g);
      inv := [j : j in cc | Order(j) eq 2 and j notin c];
      cents := {Centralizer(g,j) : j in inv};
      new := new join dedupe(W,new,cents);
    end for;
    #old, Cputime()-start;
    old := old join dedupe(W,old,current);
    current := new;
  until #new eq 0;
  return old;
end function;

for D in ["G2", "H3", "H4", "F4", "E6", "E7", "E8"] do
  D;
  W := CoxeterGroup(D);
  cents := get_cents_of_el_ab_twos(W);
  {* #j : j in cents *};
  for j in cents do
    bs(j);
  end for;
end for;
\end{verbatim}

\section{More code} \label{morecode}
\begin{verbatim}
// On input n, "sym(n)"
// constructs the matrix
// with rows Irr(S_n)
// and columns Irr(s),
// for s a Sylow 2-subgroup,
// end entries the mod 2
// inner product with the
// induced character.
///////////////////////////////////
function sym(n)
 Pa := Partitions(n);
 charsg := [SymmetricCharacter(pa) : pa in Pa];
 g := Sym(n);
 s := SylowSubgroup(g,2);
 cts := CharacterTable(s);
 res := [Restriction(j,s) : j in charsg];
 return Matrix([[Integers()!InnerProduct(j, x) mod 2 :
        j in cts]: x in res]);
end function;

// On input a matrix M
// "oneineachrow(M)"
// outputs TRUE if there is at
// least one 1 in each row,
// and FALSE otherwise.
///////////////////////////////////
function oneineachrow(M);
 c := Ncols(M);
 r := Nrows(M);
 oneinthisrow := [ &or[M[i][j] eq 1: j in [1..c]]: i in [1..r]];
 return &and oneinthisrow;
end function;

for n in [1..500] do
  start := Cputime();
  M := sym(n);
  n, oneineachrow(M), Cputime()-start;
end for;

quit;
\end{verbatim}

\begin{thebibliography}{999}

\bibitem{magma} W. Bosma, J. Cannon, and C. Playoust,
The Magma algebra system 1: the user language,
J. Symbolic Comput. 24 (1997), 235--265.

\bibitem{benard} M. Benard, On the Schur indices of characters
of the exceptional Weyl groups, Ann. of Math., 94 (1971), 89--107.

\bibitem{brauer} R. Brauer, \"Uber Zusammenhange
zwischen arithmetischen und invariententheoretischen
Eigenschaften von Gruppen linearer Substitutionen,
Sitzber.  Preuss. Akad. Wiss. (1926), 410--416.

\bibitem{fein} B. Fein, A note on the Brauer-Speiser
theorem, Proc. Amer. Math. Soc. 25 (1970), 620--621.

\bibitem{feit82} W. Feit.
The Representation Theory of Finite Groups.
North-Holland Mathematical Library Vol. 25, North-Holland (1982).

\bibitem{gg} D. Goldstein and R. Guralnick,
Certain subgroups of Weyl groups are split,  Arxiv.

\bibitem{gm} R. Guralnick and S. Montgomery,  Frobenius-Schur
indicators for subgroups and the Drinfel'd double of Weyl groups,
Trans. Amer. Math. Soc. 361  (2009),  no. 7, 3611--3632.

\bibitem{james} G. James, The Representation Theory of the
Symmetric Groups, Lecture Notes in Math. 682, Springer (1978).

\bibitem{kol}  S. Kolesnikov, On the rationality and strong
reality of Sylow \mbox{$2$-subgroups} of Weyl and alternating
groups, Algebra Logic 44 (2005), 25--30.

\bibitem{serre} J.-P. Serre, A Course in Arithmetic, Springer (1973).

\bibitem{speiser} A. Speiser, Zahlentheoretische S\"atze
aus der Gruppentheorie, Math. Z. 5 (1919), 1--6.

\bibitem{springer} T. Springer, A construction of representations
of Weyl groups, Invent. Math, 44 (1978), 279--293.

\bibitem{jgt} J. Thompson,
Some finite groups which appear as $\Gal(L/K)$, where
$K\subseteq \QQ(\mu_n)$,
in Lecture Notes in
Math. 1185 (1986), 210--230.
Lecture 6: Bilinear forms in characteristic
$p$ and the Frobenius-Schur indicator.
\end{thebibliography}
\end{document}